\documentclass[reqno,11pt]{amsart}
\usepackage[latin1]{inputenc}
\usepackage[T1]{fontenc}
\usepackage{lmodern}
\usepackage{a4wide}
\usepackage{amsmath,amsfonts,amsthm,amssymb}
\usepackage{enumitem}
\usepackage{mathtools}
\usepackage{color}
\usepackage{dsfont}
\usepackage{pgf,tikz}

\usepackage{hyperref}

\renewcommand\leq{\leqslant}
\renewcommand\geq{\geqslant}

\newcommand\be{\begin{equation}}
\newcommand\ee{\end{equation}}

\theoremstyle{plain}
\newtheorem{theorem}{\bf Theorem}[section]

\newtheorem{lemma}{\bf Lemma}[section]
\newtheorem{remark}{\bf Remark}[section]
\newtheorem{definition}{\bf Definition}[section]

\newtheorem{example}{\bf Example}[section]

\numberwithin{equation}{section}

\title[Some approximation results in Musielak-Orlicz spaces]{Some approximation results in Musielak-Orlicz spaces}

\author[A. Youssfi]{Ahmed Youssfi}
\address{University Sidi Mohamed Ben Abdellah,
	National School of Applied Sciences,
	P.O. Box 72 F\`es-Pricipale, Fez, Morocco}
\email{address:ahmed.youssfi@gmail.com ; ahmed.youssfi@usmba.ac.ma}

\author[Y. Ahmida]{Youssef Ahmida}
\address{University Sidi Mohamed Ben Abdellah,
	National School of Applied Sciences,
	P.O. Box 72 F\`es-Pricipale, Fez, Morocco}
\email{youssef.ahmida@usmba.ac.ma}

\begin{document}
\maketitle

\begin{abstract}
We give sufficient conditions for the continuity in norm of the
translation operator in the Musielak-Orlicz $L_{M}$ spaces. An application to the convergence in norm of approximate identities is given, whereby we prove density results of the smooth functions in $L_{M}$, in both modular and norm topologies. These density results are then applied to obtain basic topological properties.
\end{abstract}


{\small {\bf Key words and phrases:}  Musielak-Orlicz spaces, Density of smooth functions,  Topological properties,  $\Delta_2$-condition.}

{\small{\bf Mathematics Subject Classification (2010)}: 46E30, 46A80, 46B10}

\section{Introduction and statement of main results}\label{intro}
Classical Lebesgue and Sobolev spaces with constant exponent arise in the modeling of most materials with sufficient accuracy. For certain materials with inhomogeneities, for instance electrorheological fluids, this is not adequate, but rather the exponent should be able to vary. This leads to study those materials in Lebesgue and Sobolev spaces with variable exponent.\\
Historically, variable exponent Lebesgue spaces $L^{p(\cdot)}(\Omega)$, where $\Omega$ is an open subset of $\mathrm{R}^N$, was appeared in the literature for the first time in 1931 in a paper written by W. Orlicz \cite{O1931}.
The study of variable exponent Lebesgue spaces was then abandoned by Orlicz for the account of the theory of the function spaces $L_{M}(\Omega)$, built upon an $N$-function $M$, that now bear his name and which generalizes naturally the Lebesgue spaces with constant exponent. When we try to integrate both the two functional structures of variable exponent Lebesgue spaces and Orlicz spaces, we are led to the so-called Musielak-Orlicz spaces. This later functional structure was extensively studied from 1970's by the Polish school, notably by Musielak \cite{MJ} and Hudzik, see for instance \cite{HH4} and the references therein.
\par Here we are interested in establishing some basic approximation results in Musielak-Orlicz spaces with respect to the modular and norm
convergence, which then allow us to obtain some topological properties that constitute basic tools needed in the existence theory for partial differential equations involving nonstandard growths described in terms of Musielak-Orlicz functions. Such results requires to take into account the results earlier studied deeply in the monograph \cite{KrRu} and these presented in the pioneering work by Kov\'{a}cik and R\'{a}kosn\'{\i}k \cite{KR} concerning completeness, density, reflexivity and separability of variable exponent Lebesgue and Sobolev spaces.
\par Throughout this paper, we denote by $\Omega$ an open subset of $\mathrm{R}^N$, $N\geq 1$.
A real function $M$ : $\Omega\times[0,\infty)\to[0,\infty]$ is called a $\phi$-function, written $M\in\phi$, if
$M(x,\cdot)$ is a nondecreasing and convex function for all $x\in\Omega$ with $M(x,0)=0$, $M(x,s)>0$ for $s>0$, $M(x,s)\rightarrow\infty \mbox{ as }s\rightarrow\infty$ and $M(\cdot,s)$ is a measurable function for every $s\geq 0$.
A $\phi$-function is called $\Phi$-function, denoted by $M\in\Phi$, if furthermore it satisfies
$$
\lim_{s\to 0^+}\frac{M(x,s)}{s}=0 \mbox{ and } \lim_{s\to +\infty}\frac{M(x,s)}{s}=+\infty.
$$
Define $\overline{M}$ : $\Omega\times[0,\infty)\to[0,\infty]$ by
$$
\overline{M}(x,s)=\sup_{t\geq 0}\{st-M(x,t)\} \mbox{ for all } s\geq 0\mbox{ and all } x\in \Omega.
$$
It can be checked that $\overline{M}\in\phi$. The $\Phi$-function $\overline{M}$ is called the complementary function to $M$ in the sense of Young. Given $M\in\phi$, the Musielak-Orlicz space $L_M(\Omega)$ consists of the set of all measurable functions $u:\Omega\rightarrow \mathrm{R}$ such that $\int_\Omega M(x,|u(x)|/\lambda)dx<+\infty$ for some $\lambda>0$. Equipped with the so-called Luxemburg norm
$$
\|u\|_{L_M(\Omega)}=\inf\bigg\{\lambda>0: \int_\Omega M(x,|u(x)|/\lambda)dx\leq 1\bigg\},
$$
$L_M(\Omega)$  is a Banach space (see \cite[Theorem 7.7]{MJ}). It is a particular case of the so-called modular function spaces, investigated  by H. Nakano (see for instance \cite{Nak50}). We define $E_M(\Omega)$ as the subset of $L_M(\Omega)$ of all measurable functions $u:\Omega\to\mathrm{R}$ such that $\displaystyle\int_\Omega M(x,|u(x)|/\lambda)dx<+\infty$ for all $\lambda>0$.
\par Density result of smooth functions in Musielak-Orlicz-Sobolev spaces with respect to the modular topology  was claimed for the first time in \cite{BDV} in $\Omega=\mathrm{R}^N$ and then for a bounded star-shape Lipschitz domain $\Omega$ in \cite{BV}. The authors assumed that the $\Phi$-function $M$ satisfies, among others, the log-H\"{o}lder continuity condition, that is to say  there exists a constant $A>0$ such that for all $s\geq 1$,
\begin{equation}\label{loghtaha}
\frac{M(x,s)}{M(y,s)}\leq s^{-\frac{A}{\log|x-y|}},\; \forall x,y\in \Omega \mbox{ with } |x-y|\leq\frac{1}{2}.
\end{equation} 
Nonetheless, the proof involved an essential gap. The Jensen inequality was used for the infimum of convex functions, which obviously is not necessarily convex.


Unlike the classical Orlicz spaces, the spatial dependence of the $\phi$-function $M$ does not allow, in general, to bounded functions to belong to Musielak-Orlicz spaces. An extra additional hypothesis is needed. In the sequel, we make use of the following local integrability.
\begin{definition}
	We say that $M\in\Phi$ is locally integrable, if for any constant number $c>0$ and for every compact set $K\subset\Omega$ we have
	\begin{equation}\label{incBM}
	\int_K M(x,c)dx<\infty.
	\end{equation}
\end{definition}
Let us note that if $M\in\phi$ (resp. $\overline{M}\in\phi$) satisfies $\lim_{s\to \infty} \mathrm{ess\,inf}_{x\in\Omega}\frac{M(x,s)}{s}=+\infty$ (resp. $\lim_{s\to \infty} \mathrm{ess\,inf}_{x\in\Omega}\frac{\overline{M}(x,s)}{s}=+\infty$), then $\overline{M}$ (resp. $M$) satisfies (\ref{incBM}) not only for compact subsets $K\subset\Omega$ but for all mesurable subsets of $\Omega$  having finite Lebesgue measure .
\par From now on, $\mathcal{B}_c(\Omega)$ will stands for the set of bounded functions compactly supported in $\Omega$ and $\mathcal{C}^\infty_0(\Omega)$ will denotes the set of infinitely differentiable functions compactly supported in $\Omega$.\\ 
The condition (\ref{incBM}) ensures that the set $\mathcal{B}_c(\Omega)$ is contained in $E_M(\Omega)$. Incidentally, the functions essentially bounded do not belong necessary to $E_M(\Omega)$ even if (\ref{incBM}) is filled. Observe that if $M(x,|s|)=M(|s|)$ is independent of $x$ (that is 
$M(\cdot)$ is a continuous, nondecreasing and convex function with $M(0)=0$, $M(s)>0$ for $s>0$, $M(s)\to\infty$ as $s\to\infty$,
$\lim_{s\to 0^+}\frac{M(s)}{s}=0$ and $\lim_{s\to +\infty}\frac{M(s)}{s}=+\infty$), inequality (\ref{incBM}) is obviously satisfied. Here, we do not need to assume the condition (\ref{loghtaha}) but we only assume (\ref{incBM}). Note in passing that if $x\mapsto M(x,s)$ is a continuous function on $\Omega$, then (\ref{incBM}) holds trivially.
\par The main results  we obtain in this paper cover those already known in the frameworks of Orlicz and variable exponent Lebesgue spaces and are contained in the four following theorems that we prove in Section \ref{sec4}.
\begin{theorem}\label{th1.1.1}
	Let $M\in\Phi$ satisfy (\ref{incBM}), then
	\begin{enumerate}
		\item $\mathcal{C}^\infty_0(\Omega)$ is dense in $E_M(\Omega)$ with respect to the strong topology in $E_M(\Omega)$.
		\item $\mathcal{C}^\infty_0(\Omega)$ is dense in $L_M(\Omega)$ with respect to the modular topology in $L_M(\Omega)$.
	\end{enumerate}
\end{theorem}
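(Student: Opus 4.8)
\emph{Plan.} The plan is to reduce both statements to approximating functions in $\B_c(\Omega)$ — which lie in $E_M(\Omega)$ thanks to (\ref{incBM}) — and then to regularize those by convolution. I would fix once and for all an exhaustion $\Omega=\bigcup_{n\ge1}K_n$ by compact sets $K_n\subset K_{n+1}$, and for a measurable $u$ put $u_n=T_n(u)\,\mathds{1}_{K_n}$ with $T_n(s)=\max\{-n,\min\{n,s\}\}$; then $u_n\in\B_c(\Omega)$, $|u-u_n|\le|u|$ a.e., and $u_n\to u$ a.e. Since $M\in\Phi$ forces $M(x,s)\to0$ as $s\to0^+$ for each $x$, this also gives $M(x,|u-u_n|/\lambda)\to0$ a.e. for every $\lambda>0$. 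When $u\in E_M(\Omega)$ one has $M(\cdot,|u|/\lambda)\in L^1(\Omega)$ for all $\lambda>0$, so dominated convergence yields $\int_\Omega M(x,|u-u_n|/\lambda)\,dx\to0$ for every $\lambda>0$; taking $\lambda=\varepsilon$ gives $\|u-u_n\|_{L_M(\Omega)}\le\varepsilon$ for $n$ large, so $\B_c(\Omega)$ is norm-dense in $E_M(\Omega)$. When $u\in L_M(\Omega)$, fixing $\lambda_0>0$ with $\int_\Omega M(x,|u|/\lambda_0)\,dx<\infty$ and applying the same domination at the single level $\lambda_0$ gives $u_n\to u$ in the modular topology.

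Next I would regularize a function $v\in\B_c(\Omega)$ with, say, $\|v\|_\infty\le k$ and $\operatorname{supp}v\subset K$: extend $v$ by zero to $\R^N$, set $v_\varepsilon=v*\rho_\varepsilon$ for a standard mollifier $(\rho_\varepsilon)$, and observe that for $\varepsilon<\operatorname{dist}(K,\partial\Omega)$ one has $v_\varepsilon\in\C^\infty_0(\Omega)$, with support in a fixed compact set $K'\subset\Omega$, $\|v_\varepsilon\|_\infty\le k$, and $v_\varepsilon\to v$ a.e. Then, for every $\sigma>0$, the pointwise bound
\[
M\!\left(x,\frac{|v-v_\varepsilon|}{\sigma}\right)\le M\!\left(x,\frac{2k}{\sigma}\right)\mathds{1}_{K'}(x)
\]
holds, with right-hand side in $L^1(\Omega)$ \emph{exactly} by (\ref{incBM}), while the left-hand side tends to $0$ a.e.; dominated convergence then gives $\int_\Omega M(x,|v-v_\varepsilon|/\sigma)\,dx\to0$ for every $\sigma>0$, i.e.\ $\|v-v_\varepsilon\|_{L_M(\Omega)}\to0$. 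Thus $\C^\infty_0(\Omega)$ is norm-dense in $\B_c(\Omega)$. Note that only the convexity of $M(x,\cdot)$ in its \emph{second} variable is used here — through Jensen's inequality, should one prefer to control $v_\varepsilon-v$ by the $\rho_\varepsilon$-average of the translates $v(\cdot-y)-v(\cdot)$, which is what ties this step to the norm-continuity of approximate identities — so the obstruction caused by the non-convexity of $x\mapsto M(x,s)$ does not arise.

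With these two ingredients, (1) follows by the triangle inequality: approximate $u\in E_M(\Omega)$ within $\varepsilon/2$ by some $v\in\B_c(\Omega)$, then $v$ within $\varepsilon/2$ by some $\ph\in\C^\infty_0(\Omega)$. For (2), given $u\in L_M(\Omega)$ and $\lambda_0$ as above, I would take the truncations $u_n\in\B_c(\Omega)$ with $\int_\Omega M(x,|u-u_n|/\lambda_0)\,dx\to0$ and then, using the regularization step together with the elementary fact that norm convergence implies $\int_\Omega M(x,|w_n-w|/\lambda)\,dx\to0$ for every $\lambda>0$ (because $M(x,ts)\le t\,M(x,s)$ for $t\in[0,1]$), pick $\ph_n\in\C^\infty_0(\Omega)$ with $\int_\Omega M(x,|u_n-\ph_n|/\lambda_0)\,dx\le1/n$. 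Convexity of $M(x,\cdot)$ then gives
\[
\int_\Omega M\!\left(x,\frac{|u-\ph_n|}{2\lambda_0}\right)dx\le\frac12\int_\Omega M\!\left(x,\frac{|u-u_n|}{\lambda_0}\right)dx+\frac12\int_\Omega M\!\left(x,\frac{|u_n-\ph_n|}{\lambda_0}\right)dx\;\longrightarrow\;0,
\]
so $\ph_n\to u$ modularly, which is (2).

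The main obstacle is the regularization step: unlike in the constant-exponent Orlicz case, convolution need not be bounded on $L_M(\Omega)$, and one cannot pull the spatial variable through it. The way around this is to stay inside $\B_c(\Omega)$, where the truncation height $k$ and the enlarged support $K'$ are under control, so that (\ref{incBM}) furnishes the fixed $L^1$ majorant $M(\cdot,2k/\sigma)\mathds{1}_{K'}$ that makes dominated convergence applicable; it is exactly at this point that hypothesis (\ref{incBM}) is indispensable, and exactly here that the earlier arguments in the literature ran into trouble.
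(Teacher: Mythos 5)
Your proof is correct, and its global architecture is the one the paper uses: truncate to $\mathcal{B}_c(\Omega)$ (your $u_n=T_n(u)\mathds{1}_{K_n}$ is exactly the paper's Lemma \ref{th1.1.8}), mollify inside $\mathcal{B}_c(\Omega)$, and assemble part (1) by the triangle inequality and part (2) by convexity of $M(x,\cdot)$, exactly as in Section \ref{sec4}. Where you genuinely diverge is the mollification step. The paper proves Lemma \ref{lem3.4} by first establishing the $M$-mean continuity of translations for functions of $\mathcal{B}_c(\Omega)$ (Lemma \ref{lem6.1}, via Luzin's theorem and the absolute continuity of $\int M(x,C)\,dx$ over an enlarged compact), and then transfers this to $J_\varepsilon\ast u$ through a Fubini--H\"older duality estimate, which in turn requires the equivalence of the Orlicz and Luxemburg norms (Lemma \ref{lem1.1.6}). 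You bypass all of that: since $v\in\mathcal{B}_c(\Omega)$, the mollifications $v_\varepsilon$ have height $\leq k$ and a common compact support $K'\subset\Omega$, converge a.e., and $M(x,|v-v_\varepsilon|/\sigma)$ is dominated by the fixed majorant $M(\cdot,2k/\sigma)\mathds{1}_{K'}\in L^1(\Omega)$ supplied by \eqref{incBM}; dominated convergence at every level $\sigma$ then gives norm convergence directly. Your route is shorter and more elementary (no Luzin, no translation lemma, no norm equivalence), at the price of proving only the convergence of this specific approximate identity, whereas the paper's detour additionally yields the norm continuity of the translation operator on $\mathcal{B}_c(\Omega)$, which is one of its announced results in its own right. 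Two points you should state explicitly, both standard and available here: the a.e. convergence $v_\varepsilon\to v$ comes from the Lebesgue differentiation theorem applied to the zero extension of $v$, and passing from $|v-v_\varepsilon|\to 0$ a.e. to $M(x,|v-v_\varepsilon|/\sigma)\to 0$ a.e. uses $M(x,s)\to 0$ as $s\to 0^{+}$, guaranteed by the $\Phi$-normalization $\lim_{s\to 0^{+}}M(x,s)/s=0$. With these remarks added, your argument stands as a complete and somewhat leaner proof of Theorem \ref{th1.1.1}.
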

\begin{theorem}\label{th1.1.2}
	Assume that $M\in\Phi$ satisfies (\ref{incBM}). Then, the space $E_M(\Omega)$ is separable.
\end{theorem}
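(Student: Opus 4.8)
The plan is to exhibit an explicit countable subset of $E_M(\Omega)$ and prove it is dense, leaning on the density of smooth functions already obtained in Theorem \ref{th1.1.1}. Let $\mathcal{D}$ be the set of all finite linear combinations $\sum_j q_j\chi_{Q_j}$ with rational coefficients $q_j\in\mathbb{Q}$, where each $Q_j$ is a dyadic cube (a set of the form $2^{-m}(k+[0,1)^N)$, $m\in\N$, $k\in\Z^N$) whose closure satisfies $\overline{Q_j}\subset\Omega$. There are only countably many dyadic cubes, so $\mathcal{D}$ is countable, and every $v\in\mathcal{D}$ is bounded and supported in a finite union of the compact sets $\overline{Q_j}\subset\Omega$, hence $\mathcal{D}\subset\mathcal{B}_c(\Omega)\subset E_M(\Omega)$, the last inclusion being exactly what (\ref{incBM}) provides. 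The one genuinely delicate point to keep in mind is that a general $u\in E_M(\Omega)$ cannot be approximated \emph{uniformly} by step functions (a bounded measurable function need not be a uniform limit of dyadic step functions), which is precisely why the reduction to $\mathcal{C}^\infty_0(\Omega)$ is essential: uniform continuity of a smooth compactly supported function is what makes the dyadic step-function approximation work.

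So, given $u\in E_M(\Omega)$ and $\varepsilon>0$, Theorem \ref{th1.1.1}(1) yields $\varphi\in\mathcal{C}^\infty_0(\Omega)$ with $\|u-\varphi\|_{L_M(\Omega)}\leq\varepsilon/2$, and it suffices to approximate $\varphi$ within $\varepsilon/2$ by an element of $\mathcal{D}$. Set $K=\operatorname{supp}\varphi$, a compact subset of $\Omega$, and fix a compact neighbourhood $L$ of $K$ with $K\subset L\subset\Omega$. The elementary estimate we need is: if $w$ is measurable with $\operatorname{supp}w\subset L$ and $|w|\leq\d$ a.e., then for every $\lambda>0$,
\[
\int_\Omega M\!\left(x,\tfrac{|w(x)|}{\lambda}\right)dx\leq\int_L M\!\left(x,\tfrac{\d}{\lambda}\right)dx,
\]
since $M(x,\cdot)$ is nondecreasing. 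Because $M\in\Phi$ (so $M(x,s)\to 0$ as $s\to 0^+$ for every $x$) and $M(x,\d/\lambda)\leq M(x,1)\in L^1(L)$ once $\d\leq\lambda$, dominated convergence gives a $\d_0=\d_0(L,\varepsilon)>0$ with $\int_L M\big(x,\d_0/(\varepsilon/2)\big)\,dx\leq 1$; hence any such $w$ with $\d\leq\d_0$ has $\|w\|_{L_M(\Omega)}\leq\varepsilon/2$.

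It remains to produce $v\in\mathcal{D}$ with $\operatorname{supp}(\varphi-v)\subset L$ and $|\varphi-v|\leq\d_0$ a.e. Since $\varphi$ is smooth with compact support, it is uniformly continuous, so there is $\rho>0$ with $|\varphi(x)-\varphi(y)|<\d_0/2$ whenever $|x-y|<\rho$. Choose $m\in\N$ so large that dyadic cubes of side $2^{-m}$ have diameter less than $\rho$ and, in addition, every such cube meeting $K$ has closure contained in $L$ (possible because $\operatorname{dist}(K,\Omega^c)>0$ when $\Omega\neq\R^N$, and trivially otherwise). For each dyadic cube $Q$ of side $2^{-m}$ with $Q\cap K\neq\emptyset$, pick $x_Q\in Q$ and $q_Q\in\mathbb{Q}$ with $|q_Q-\varphi(x_Q)|<\d_0/2$, and put $v=\sum_{Q\cap K\neq\emptyset}q_Q\chi_Q$, a finite sum in $\mathcal{D}$. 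On each such $Q$ we get $|\varphi-v|\leq|\varphi(\cdot)-\varphi(x_Q)|+|\varphi(x_Q)-q_Q|<\d_0$, while off $\bigcup_{Q\cap K\neq\emptyset}Q$ both $\varphi$ and $v$ vanish; moreover $\operatorname{supp}v\subset\bigcup_{Q\cap K\neq\emptyset}\overline{Q}\subset L$. By the previous paragraph $\|\varphi-v\|_{L_M(\Omega)}\leq\varepsilon/2$, whence $\|u-v\|_{L_M(\Omega)}\leq\varepsilon$. Thus $\mathcal{D}$ is a countable dense subset of $E_M(\Omega)$, which is therefore separable. The main obstacle, as noted, is the impossibility of a direct uniform step-function approximation in $E_M(\Omega)$; once Theorem \ref{th1.1.1} is in hand the rest is a routine combination of uniform continuity, (\ref{incBM}), and dominated convergence.
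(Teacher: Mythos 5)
Your proof is correct: the countable set $\mathcal{D}$ of rational dyadic step functions lies in $\mathcal{B}_c(\Omega)\subset E_M(\Omega)$ by (\ref{incBM}), the reduction to $\varphi\in\mathcal{C}^\infty_0(\Omega)$ via Theorem \ref{th1.1.1} is exactly what makes the uniform dyadic approximation legitimate, and the passage from the uniform bound $|\varphi-v|\leq\delta_0$ on a fixed compact $L$ to $\|\varphi-v\|_{L_M(\Omega)}\leq\varepsilon/2$ via $\int_L M(x,\delta_0/(\varepsilon/2))\,dx\leq 1$ is the same mechanism the paper uses. The overall architecture thus matches the paper's proof; the difference is in the choice of countable family and of the uniform approximation tool. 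The paper reduces to $\mathcal{C}_c(\Omega)$, exhausts $\Omega$ by the compacts $\Omega_n$, and invokes the Weierstrass approximation theorem to approximate uniformly by polynomials with rational coefficients restricted to $\Omega_n$, so its dense set consists of (restrictions of) continuous functions; you instead use only the uniform continuity of $\varphi$ and approximate by rational dyadic step functions, which is more elementary (no Stone--Weierstrass needed) and has the pleasant side effect of exhibiting a countable dense subfamily of the set $\mathcal{S}_c$ of Lemma \ref{th1.2.1}. One cosmetic remark: the condition you really need for ``every small cube meeting $K$ has closure in $L$'' is that $\mathrm{dist}(K,\mathrm{R}^N\setminus L)>0$, which holds because $L$ is a compact neighbourhood of $K$ (e.g. $L=\{x:\mathrm{dist}(x,K)\leq r\}$ with $2r<\mathrm{dist}(K,\Omega^c)$, or any $r>0$ if $\Omega=\mathrm{R}^N$); the appeal to $\mathrm{dist}(K,\Omega^c)$ alone is slightly misplaced but trivially repaired and does not affect the argument.
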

\begin{theorem}\label{thE1}
	Let $M\in\Phi$ satisfy (\ref{incBM}) and let $\overline{M}$ stands for it's complementary function. Then, the dual space  $(E_M(\Omega))^\prime$ of $E_M(\Omega)$ is isomorphic to $L_{\overline{M}}(\Omega)$, denote $(E_M(\Omega))^\prime\simeq L_{\overline{M}}(\Omega)$.
\end{theorem}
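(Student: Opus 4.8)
The plan is to prove that the canonical map $T\colon L_{\overline M}(\Omega)\to (E_M(\Omega))'$ defined by $(Tv)(u)=\int_\Omega u(x)v(x)\,dx$ is a linear isomorphism. First I would record the H\"{o}lder-type inequality in Musielak--Orlicz spaces: applying the Young inequality $st\leq M(x,t)+\overline M(x,s)$ with $t=|u(x)|/\|u\|_{L_M(\Omega)}$ and $s=|v(x)|/\|v\|_{L_{\overline M}(\Omega)}$ and integrating yields $uv\in L^1(\Omega)$ together with $\int_\Omega|uv|\,dx\leq 2\|u\|_{L_M(\Omega)}\|v\|_{L_{\overline M}(\Omega)}$; hence $T$ is well defined, linear, and $\|Tv\|\leq 2\|v\|_{L_{\overline M}(\Omega)}$. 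For injectivity, fix an exhaustion of $\Omega$ by open sets $\Omega_k$ with $\overline{\Omega_k}$ compact, $\overline{\Omega_k}\subset\Omega_{k+1}$ and $\bigcup_k\Omega_k=\Omega$; by (\ref{incBM}) every bounded function supported in some $\Omega_k$ belongs to $\mathcal{B}_c(\Omega)\subset E_M(\Omega)$, so if $Tv=0$ then testing against $\chi_A\,\mathrm{sgn}(v)$ for measurable $A\subseteq\Omega_k$ gives $\int_A|v|\,dx=0$, whence $v=0$ a.e.

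Surjectivity is the heart of the argument. Given $F\in (E_M(\Omega))'$ and the exhaustion $(\Omega_k)$ above, define for each $k$ the set function $\nu_k(A)=F(\chi_A)$ on measurable $A\subseteq\Omega_k$; this makes sense because $\chi_A\in\mathcal{B}_c(\Omega)\subset E_M(\Omega)$ by (\ref{incBM}). Linearity of $F$ gives finite additivity, while $\sigma$-additivity and absolute continuity with respect to Lebesgue measure follow from the crucial observation that $\|\chi_{A_n}\|_{L_M(\Omega)}\to 0$ whenever $|A_n|\to 0$ with $A_n\subseteq\Omega_k$: indeed, for every $\lambda>0$, $\int_\Omega M(x,\chi_{A_n}(x)/\lambda)\,dx=\int_{A_n}M(x,1/\lambda)\,dx\to 0$ since $M(\cdot,1/\lambda)\in L^1(\Omega_k)$ by (\ref{incBM}), and therefore $\nu_k(A_n)=F(\chi_{A_n})\to 0$. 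By the Radon--Nikodym theorem there is $v_k\in L^1(\Omega_k)$ with $\nu_k(A)=\int_A v_k\,dx$; uniqueness gives $v_{k+1}=v_k$ a.e. on $\Omega_k$, so the $v_k$ glue into some $v\in L^1_{\mathrm{loc}}(\Omega)$. By linearity, and by uniform approximation of bounded functions by simple ones (which also converges in $E_M(\Omega)$ by the same computation), one gets $F(u)=\int_\Omega uv\,dx$ for every $u\in\mathcal{B}_c(\Omega)$.

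It remains to prove $v\in L_{\overline M}(\Omega)$. I would estimate the Orlicz norm $\|v\|^O_{\overline M}=\sup\{\int_\Omega|u||v|\,dx:\int_\Omega M(x,|u(x)|)\,dx\leq 1\}$, using the standard equivalence $\|v\|_{L_{\overline M}(\Omega)}\leq\|v\|^O_{\overline M}\leq 2\|v\|_{L_{\overline M}(\Omega)}$. If $\int_\Omega M(x,|u|)\,dx\leq 1$, set $w_n=|u|\,\chi_{\{|u|\leq n\}\cap\Omega_n}\,\mathrm{sgn}(v)\in\mathcal{B}_c(\Omega)$; then $\int_\Omega M(x,|w_n|)\,dx\leq 1$, so $\|w_n\|_{L_M(\Omega)}\leq 1$ and $F(w_n)=\int_\Omega|u||v|\,\chi_{\{|u|\leq n\}\cap\Omega_n}\,dx\leq\|F\|$. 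Letting $n\to\infty$ and invoking monotone convergence gives $\int_\Omega|u||v|\,dx\leq\|F\|$, hence $\|v\|^O_{\overline M}\leq\|F\|<\infty$, so $v\in L_{\overline M}(\Omega)$ with $\|v\|_{L_{\overline M}(\Omega)}\leq\|F\|$. Since $F$ and $Tv$ are continuous on $E_M(\Omega)$ and agree on $\mathcal{B}_c(\Omega)$, which is dense in $E_M(\Omega)$ by Theorem \ref{th1.1.1}(1) (it contains $\mathcal{C}^\infty_0(\Omega)$), we conclude $F=Tv$. Together with $\|v\|_{L_{\overline M}(\Omega)}\leq\|Tv\|\leq 2\|v\|_{L_{\overline M}(\Omega)}$ this shows $T$ is an isomorphism (in fact an isometry if $L_{\overline M}(\Omega)$ carries the Orlicz norm).

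The step I expect to be the main obstacle is surjectivity, and within it two delicate points: verifying that the set functions $\nu_k$ are $\sigma$-additive and absolutely continuous with respect to Lebesgue measure --- which is exactly where the local integrability hypothesis (\ref{incBM}) and the density of $\mathcal{B}_c(\Omega)$ in $E_M(\Omega)$ provided by Theorem \ref{th1.1.1} are used in an essential way --- and the upgrade from a locally integrable representing function $v$ to genuine membership $v\in L_{\overline M}(\Omega)$, which rests on the Orlicz-norm characterization together with the truncation and monotone-convergence argument sketched above.
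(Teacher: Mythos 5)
Your proof is correct and, at its core, follows the same route as the paper, which deduces Theorem \ref{thE1} from the representation Lemma \ref{lem1.1.9} (together with Lemma \ref{lem6.2} for the boundedness and the norm bounds of $L_v$): one defines a set function $A\mapsto F(\chi_A)$, shows it is absolutely continuous with respect to Lebesgue measure, applies Radon--Nikodym, and extends the representation by density. The technical execution differs in a few places, mostly to your advantage. Where the paper obtains absolute continuity from the separate estimate (\ref{eq1.6.2}) on $\|\chi_E\|_{L_M(\Omega)}$ (proved via Luzin's theorem), you get it directly from (\ref{incBM}) through the absolute continuity of the integral of $M(\cdot,1/\lambda)$ on an exhausting sequence $\Omega_k$, and you produce local derivatives $v_k\in L^1(\Omega_k)$ glued into $v\in L^1_{\mathrm{loc}}(\Omega)$; this is cleaner than the paper's assertion of a single $v\in L^1(\Omega)$. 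Where the paper extends from the compactly supported simple functions $\mathcal{S}_c$ (Lemma \ref{th1.2.1}) and bounds $\big|\int_\Omega uv\,dx\big|\leq\|L\|\,\|u\|_{L_M(\Omega)}$ for all $u\in E_M(\Omega)$ by Fatou, you extend from $\mathcal{B}_c(\Omega)$ (Lemma \ref{th1.1.8}) and bound the Orlicz-type supremum by $\|F\|$ via truncations $w_n$ and monotone convergence; both are fine. One caveat applies equally to your write-up and to the paper: the passage from the finiteness of $\sup\{\int_\Omega|uv|\,dx:\ \|u\|_{L_M(\Omega)}\leq1\}$ to the conclusion $v\in L_{\overline{M}}(\Omega)$ is the ``converse H\"older'' fact, which the paper states without proof (``this implies $v\in L_{\overline{M}}(\Omega)$'') and which you invoke through the Orlicz/Luxemburg norm equivalence, whose usual formulation presupposes $v\in L_{\overline{M}}(\Omega)$; if you want a fully self-contained argument, this is the one step to prove explicitly (e.g.\ by testing against truncations of $a^{*}(x,|v(x)|/k)$ and using the equality case in Young's inequality). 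Your injectivity check and the final density step (continuity of $F$ and of $Tv$ plus density of $\mathcal{B}_c(\Omega)\supset\mathcal{C}^\infty_0(\Omega)$ in $E_M(\Omega)$) correspond to the paper's use of uniqueness in Lemma \ref{lem1.1.9} and of Lemma \ref{th1.2.1}.
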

Another aspect of duality in Musielak spaces can be found in \cite{KK}. We present here a simple proof involving the results obtained. A special class of $\Phi$-functions is the following.
\begin{definition}
	We say that $M\in\Phi$ satisfies the $\Delta_2$-condition, written $M\in\Delta_2$, if there exist a constant $k>0$ and a nonnegative function $h\in L^1(\Omega)$ such that
	\begin{equation}\label{delta2}
	M(x,2t)\leq kM(x,t)+h(x),
	\end{equation}
	for all $t\geq 0$ and for almost every  $x\in\Omega$.
\end{definition}
\begin{remark}
	The hypothesis (\ref{delta2}) seems to be natural in the sense that if the $\Phi$-function $M$ is independent of the variable $x$, i.e. $M$: $[0,\infty)\to [0,\infty]$ is an $N$-function (see \cite{AA}), it is equivalent to the formulation of the $\Delta_2$-condition considered in the Orlicz spaces setting. That is there exist two constants $c>0$ and $t_0>0$ such that
	\begin{equation}\label{delta2_or}
	M(2t)\leq cM(t)
	\end{equation}	
	for every $t\geq t_0$ or $t\geq 0$ according to whether  $\Omega$ has finite Lebesgue measure or not successively. Indeed, assume that $|\Omega|<+\infty$, where from now on $|E|$ stands for the Lebesgue measure of a subset $E$ of $\mathrm{R}^N$. Suppose that (\ref{delta2}) holds but (\ref{delta2_or}) is not true. Thus, there exists a sequence $t_n\geq t_0$ such that $t_n\to+\infty$ satisfying $M(2t_n)>2^nM(t_n)$. Then, by (\ref{delta2}) we have
	$$
	\int_{\Omega}h(x)dx\geq (2^n-k)M(t_n)|\Omega|.
	$$
	The passage to the limit as $n$ tends to infinity yields a contradiction with the fact that $h\in L^1(\Omega)$. Conversely, assume that (\ref{delta2_or}) is fulfilled. The function $f$ defined by $f(t)=M(2t)-cM(t)$ is continuous  and so we get $0\leq \max_{0\leq t\leq t_0}f(t)=C<+\infty$. Therefore, we obtain
	$$
	M(2t)\leq cM(t)+C, \mbox{ for all } t\geq0.
	$$
	Hence, we can choose $h(x)=C$. Suppose now that $|\Omega|=+\infty$. Assume that (\ref{delta2}) holds but (\ref{delta2_or}) is not true. So that for $c=k$, there is $s\geq 0$ satisfying $M(2s)>kM(s)$. From (\ref{delta2}) we can write $h(x)\geq M(2t)-kM(t)$ for all $t\geq 0$. Substituting $t$ by $s$ and then integrating over $\Omega$ one has
	$$
	\int_{\Omega}h(x)dx\geq (M(2s)-kM(s))|\Omega|=+\infty,
	$$
	which contradicts the fact that $h\in L^1(\Omega)$. Conversely, if (\ref{delta2_or}) is true then (\ref{delta2}) holds for $h=0$.
\end{remark}
\begin{theorem}\label{th1.2.2}
	Let $M,$ $\overline{M}\in\Phi$ be a  pair of complementary $\Phi$-functions satisfying both (\ref{incBM}) and the $\Delta_2$-condition. Then, the Musielak-Orlicz space $L_M(\Omega)$ is reflexive.
\end{theorem}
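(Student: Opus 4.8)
The plan is to deduce reflexivity from the duality identification of Theorem~\ref{thE1}, the key preliminary observation being that under the $\Delta_2$-condition the ``small'' and ``large'' Musielak--Orlicz spaces coincide: $L_M(\Omega)=E_M(\Omega)$ and $L_{\overline M}(\Omega)=E_{\overline M}(\Omega)$.

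For the first identity I would iterate (\ref{delta2}): for every $n\in\N$,
\[
M(x,2^{n}t)\leq k^{n}M(x,t)+\Big(\sum_{j=0}^{n-1}k^{j}\Big)h(x),\qquad\text{for a.e. }x\in\Omega,\ \text{all }t\geq 0 .
\]
If $u\in L_M(\Omega)$, choose $\lambda_0>0$ with $\int_\Omega M(x,|u|/\lambda_0)\,dx<\infty$. Given an arbitrary $\lambda>0$, pick $n$ with $2^{n}\lambda\geq\lambda_0$; then $|u(x)|/\lambda\leq 2^{n}|u(x)|/\lambda_0$, so by monotonicity of $M(x,\cdot)$ and the displayed inequality with $t=|u(x)|/\lambda_0$,
\[
\int_\Omega M\!\left(x,\tfrac{|u|}{\lambda}\right)dx\leq k^{n}\int_\Omega M\!\left(x,\tfrac{|u|}{\lambda_0}\right)dx+\Big(\sum_{j=0}^{n-1}k^{j}\Big)\|h\|_{L^1(\Omega)}<\infty .
\]
Hence $u\in E_M(\Omega)$ and $L_M(\Omega)=E_M(\Omega)$. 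Since $\overline M\in\Phi$ also satisfies (\ref{incBM}) and the $\Delta_2$-condition by hypothesis, the same computation gives $L_{\overline M}(\Omega)=E_{\overline M}(\Omega)$.

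Next I would apply Theorem~\ref{thE1} to $M$ and then to $\overline M$ (legitimate since $\overline M\in\Phi$ satisfies (\ref{incBM}), and $\overline{\overline M}=M$), recalling that the isomorphism it provides is given by the duality pairing $\langle u,v\rangle=\int_\Omega u(x)v(x)\,dx$. Combining with the first step,
\[
\big(L_M(\Omega)\big)'=\big(E_M(\Omega)\big)'\simeq L_{\overline M}(\Omega)=E_{\overline M}(\Omega),\qquad \big(L_M(\Omega)\big)''\simeq\big(E_{\overline M}(\Omega)\big)'\simeq L_M(\Omega),
\]
each isomorphism being realised through the same bracket $\int_\Omega(\cdot)(\cdot)\,dx$.

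It remains to verify that the composite isomorphism $L_M(\Omega)\to\big(L_M(\Omega)\big)''$ just produced coincides with the canonical embedding $J$. Let $F\in\big(L_M(\Omega)\big)''$; the second isomorphism yields $u\in L_M(\Omega)$ with $F(v)=\int_\Omega uv\,dx$ for all $v\in L_{\overline M}(\Omega)$, while for any $\Lambda\in\big(L_M(\Omega)\big)'$ the first isomorphism gives $v\in L_{\overline M}(\Omega)$ with $\Lambda(w)=\int_\Omega wv\,dx$ for all $w\in L_M(\Omega)$. Therefore $J(u)(\Lambda)=\Lambda(u)=\int_\Omega uv\,dx=F(v)=F(\Lambda)$, so $J(u)=F$ and $J$ is surjective; being always an isometric embedding, $J$ is then an isometric isomorphism, i.e. $L_M(\Omega)$ is reflexive. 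The only genuinely delicate point is this last identification: one must make sure that the equality $L_M=E_M$, Theorem~\ref{thE1} applied to $M$, and Theorem~\ref{thE1} applied to $\overline M$ are all expressed through the \emph{same} duality pairing, so that their composition is exactly $J$; the iterated $\Delta_2$ estimate and the biconjugation identity $\overline{\overline M}=M$ are the supporting technical ingredients.
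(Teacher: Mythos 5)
Your proposal is correct and follows essentially the same route as the paper: your iterated $\Delta_2$ estimate is exactly the content of Lemma \ref{delta2equivalence} (direction ii)$\Rightarrow$i)), after which you apply Theorem \ref{thE1} to $M$ and to $\overline{M}$, just as the paper does. The only difference is that you spell out the verification that the composite isomorphism realises the canonical embedding $J$ (a detail the paper leaves implicit with ``the conclusion follows''), which is a welcome but not divergent addition.
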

\par Theorem \ref{th1.1.1} is a unified generalization of the approximation results known in Lebesgue spaces $L^p(\Omega)$, $1<p<\infty$ and Orlicz spaces. The approach, now classical, is based upon reducing the study to continuous functions compactly supported in $\Omega$ and then using imbedding theorems and a sequence of mollifiers to conclude, see for instance \cite[Corollary 2.30 and Theorem 8.21]{AA}. This classical approach is based on the fact that the translation operator $u(\cdot+h)$ is continuous in norm when $h$ tends to zero. This fails to hold in Musielak-Orlicz spaces (see Remark \ref{remark1} below and \cite[Proposition 3.6.1]{DHHM}). Consequently, we can not approximate in general the identities for a given function.
\par In the framework of variable exponent Lebesgue spaces $L^{p(\cdot)}(\Omega)$, Kov\'{a}cik and R\'{a}kosn\'{i}k \cite[Theorem 2.11]{KR} proved first the density of the set $\mathcal{C}^\infty_0(\Omega)$ of infinitely differentiable functions compactly supported in $\Omega$, provided only that the variable exponent $p(\cdot)\in L^{\infty}(\Omega)$. Their idea consists in showing successively that the set of essentially bounded functions $L^\infty (\Omega)\cap L^{p(\cdot)}(\Omega)$ is dense in $L^{p(\cdot)}(\Omega)$ and by means of Luzin's theorem the subset of continuous functions $\mathcal{C}(\Omega)\cap L^{p(\cdot)}(\Omega)$ is dense in $L^{p(\cdot)}(\Omega)$, which finally lead to the density of the set $\mathcal{C}^\infty_0(\Omega)$ in $L^{p(\cdot)}(\Omega)$.
\par In Musielak-Orlicz spaces, the situation is more complicated. At first,
we note that although the assumption (\ref{incBM}) is satisfied, the inclusion $L^\infty(\Omega)\subset L_{M(\cdot,\cdot)}(\Omega)$ does not hold true in general even if $\Omega$ is an open of $\mathrm{R}^N$ with finite Lebesgue measure. Moreover, the Musielak space $E_M(\Omega)$ does not contain, in general, the set of smooth functions $\mathcal{C}^\infty_0(\Omega)$. Secondly, the use of the similar idea to that of Cruz-Uribe and Fiorenza \cite{CF} will requires additional assumptions. Thirdly, in contrast to what is mentioned above, the translation operator is not acting, in general, between Musielak-Orlicz spaces (see \cite[example 2.9 and Theorem 2.10]{KR} and \cite[Proposition 3.6.1]{DHHM}). For these reasons and for the best of our knowledge, it is not possible to obtain approximation results using classical ideas. The approach we use is based on the local integrability assumption (\ref{incBM}) and consists in beginning by proving
the density of smooth functions $\mathcal{C}^\infty_0(\Omega)$ in $\mathcal{B}_c(\Omega)$ with respect to the norm in $L_M(\Omega)$, (see Lemma \ref{lem3.4}), and then the density of bounded functions compactly supported in norm in $E_M(\Omega)$ and in modular in $L_M(\Omega)$, (see Lemma \ref{th1.1.8}), which allow us to get the density of smooth functions $\mathcal{C}^\infty_0(\Omega)$ in $E_M(\Omega)$ and $L_M(\Omega)$ with respect to the norm and modular convergence respectively. The idea we use in this paper is essentially based upon using the fact that for a function $u\in\mathcal{B}_c(\Omega)$, the translation operator $u(\cdot+h)$ is continuous with respect to the norm $\|\cdot\|_{L_{M}(\Omega)}$ as $h$ tends to $0$ (see Lemma \ref{lem6.1} below). This constitutes the main reason why we introduce the space $\mathcal{B}_c(\Omega)$.

\par In the framework of classical Lebesgue or Orlicz spaces, the separability of the closure of bounded functions compactly supported in $\overline{\Omega}$ is well-known, see for instance \cite[Theorem 2.21 and Theorem 8.21]{AA}, while for bounded variable exponent spaces one can see \cite[corollary 2.12 ]{KR}.
Here, using the density results obtained in Theorem \ref{th1.1.1} we prove the separability of $E_M(\Omega)$.
Our proof is totally different from that given by Musielak \cite[Theorem 7.10]{MJ}, since the author used the density of simple functions assuming that
the $\Phi$-function $M$ satisfies the local integrability condition on a measure space $(\Omega,\Sigma,\mu)$ where $\mu$ is a positive complete  measure, that is to say $\int_D M(x,s)d\mu<\infty$ for every $s>0$ and $D\in\Sigma$  with $\mu(D)<\infty$,
(see \cite[Definition 7.5]{MJ}), which is stronger than the condition (\ref{incBM}) when we limit ourselves to the Borel tribe.
\par In the Orlicz spaces, the density of simple functions, denoted $\mathcal{S}$, in $E_M(\Omega)$ is an important step in the proof of the duality result (see for instance \cite[Theorem 8.19]{AA}). In the Musielak-Orlicz spaces such approximation result needs to assume a local integrability condition (see \cite[Theorem 7.6]{MJ}) which allows us to get the inclusion $\mathcal{S}\subset E_M(\Omega)$.\\
In Theorem \ref{thE1} we use the weaker condition (\ref{incBM})
and act otherwise in the prove by using the density of the set $S_c$ of simple functions compactly supported in $\Omega$ ( see Lemma \ref{th1.2.1} below) instead of simple functions only. Related to this topic, one can also consult \cite{KK}.
\par The paper is organized as follow, in Section \ref{sec1} we give some properties of Musielak-Orlicz spaces where the most of them can be found in the standard monograph of J. Musielak \cite{MJ}. In Section \ref{sec2} we prove the continuity in norm of the translation. In Section \ref{sec3}, we give some auxiliary lemmata needed to prove the main results. The proof of the main results are presented in Section \ref{sec4}.
\section{Background}\label{sec1}
Here we give some definitions and properties that concern Musielak-Orlicz spaces. Observe first that equivalently a $\Phi$-function $M$ can be represented as (see \cite[Theorem 13.2]{MJ})
$$
M(x,t)=\int_{0}^{t}a(x,s)ds, \mbox{ for }t\geq 0,
$$
where $a(x,\cdot)$ is a right continuous and increasing function, $a(x,s)>0$ for $s>0$, $a(x,0)=0$, $a(x,s)\rightarrow\infty$ as $s\rightarrow \infty$
for every $x\in \Omega$.
The complementary of a $\Phi$-function $M$ (see \cite[Definition 13.4]{MJ}) can be also expressed as
$$
\overline{ M}(x,t)=\int_{0}^{t}a^*(x,s)ds, \mbox{ for }t\geq 0,
$$
where $a^*(x,s)=\sup\{v, a(x,v)\leq s\}$. Moreover, we have the Young inequality
\begin{equation}\label{younginequality}
uv\leq M(x,u)+ \overline{M}(x,v),\quad \forall u,v\geq 0, \forall x\in\Omega,
\end{equation}
which reduces to an equality when $v=a(x,u)$ or $u=a^{\ast}(x,v)$.
It's easy to check that
\begin{equation}\label{eq1.7.1}
\|u\|_{L_M(\Omega)}\leq 1\Leftrightarrow \int_\Omega M(x,|u(x)|)dx\leq 1.
\end{equation}
We also have the following H\"{o}lder inequality (see \cite[Theorem 13.13]{MJ})
\begin{equation}\label{eq1.1.4}
\int_\Omega |u(x)v(x)| dx\leq\|u\|_{M}\|v\|_{L_{\overline{M}}(\Omega)}
\end{equation}
for all $u\in L_M(\Omega)$ and $v\in L_{\overline{M}}(\Omega)$, where
$\|u\|_{M}=\displaystyle\sup_{\|v\|_{L_{\overline{M}}}\leq1}\int_\Omega |u(x)v(x)|dx$ is the Orlicz norm.
We denote by $M^{-1}$ the inverse of the $\Phi$-function $M$ with respect to its second argument defined as follows
$$
M^{-1}(x,t)=\inf\{s\geq 0, M(x,s)\geq t\}.
$$
Thus, it follows
\begin{equation}\label{inversefunction1}
M^{-1}(x,M(x,s))=M(x,M^{-1}(x,s))=s.
\end{equation}
A sequence $\{u_n\}$ is said to converge to $u$ in $L_M(\Omega)$ in the modular sense, if there exists $\lambda>0$ such that
$$
\int_{\Omega}M\Big(x,\frac{|u_n(x)-u(x)|}{\lambda}\Big)dx\rightarrow 0, \mbox{ as } n\rightarrow \infty.
$$
We say that $\{u_n\}$ converges to $u$ in norm in $L_M(\Omega)$,
if $\|u_n-u\|_{L_M(\Omega)}\rightarrow 0$ as $n\rightarrow \infty$.
\begin{definition}
	A $\Phi$-function $M$ is said to satisfy the weak $\Delta_2$-condition, written $M\in\Delta_{2,w}$, if for every $u_n\in L_{M}(\Omega)$
	$$
	\int_{\Omega}M(x,u_n)dx\rightarrow 0
	\mbox{ implies }\int_{\Omega}M(x,2u_n)dx\rightarrow 0 \mbox{ as } n\rightarrow \infty.
	$$
\end{definition}
\begin{lemma}
	Let $M\in\Phi$. Then the norm convergence and the modular convergence are equivalent if and only if $M\in\Delta_{2,w}$.
\end{lemma}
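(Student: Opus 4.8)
The plan is to reduce the statement to a one-directional assertion by recording first the trivial half: norm convergence always implies modular convergence. Indeed, if $\|u_n-u\|_{L_M(\Omega)}\to 0$, then for $n$ large we may set $\mu_n:=\|u_n-u\|_{L_M(\Omega)}\in(0,1)$ (the case $\mu_n=0$ being trivial), so that $\|(u_n-u)/\mu_n\|_{L_M(\Omega)}=1$ and, by (\ref{eq1.7.1}), $\int_\Omega M(x,|u_n-u|/\mu_n)\,dx\leq 1$. Since $M(x,\cdot)$ is convex with $M(x,0)=0$ we have $M(x,|u_n-u|)\leq\mu_n M(x,|u_n-u|/\mu_n)$, hence $\int_\Omega M(x,|u_n-u|)\,dx\leq\mu_n\to 0$, i.e. $u_n\to u$ modularly with $\lambda=1$. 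Therefore the lemma amounts to proving: \emph{modular convergence implies norm convergence if and only if $M\in\Delta_{2,w}$.}

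For the ``if'' direction, I would assume $M\in\Delta_{2,w}$ and take $u_n\to u$ modularly, so $\int_\Omega M(x,|u_n-u|/\lambda)\,dx\to 0$ for some $\lambda>0$. Put $v_n:=(u_n-u)/\lambda\in L_M(\Omega)$, so that $\int_\Omega M(x,|v_n|)\,dx\to 0$. Applying the $\Delta_{2,w}$-condition to $(v_n)$ gives $\int_\Omega M(x,2|v_n|)\,dx\to 0$; applying it next to $(2v_n)$ and iterating, one obtains $\int_\Omega M(x,2^k|v_n|)\,dx\to 0$ for every fixed $k\in\N$. Now fix $\e>0$ and choose $k\in\N$ with $2^k\geq 1/\e$; since $M(x,\cdot)$ is nondecreasing, $M(x,|v_n|/\e)\leq M(x,2^k|v_n|)$, whence $\int_\Omega M(x,|v_n|/\e)\,dx\to 0$ as $n\to\infty$. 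In particular this integral is $\leq 1$ for $n$ large, so $\|v_n\|_{L_M(\Omega)}\leq\e$ by definition of the Luxemburg norm. As $\e>0$ is arbitrary, $\|u_n-u\|_{L_M(\Omega)}=\lambda\|v_n\|_{L_M(\Omega)}\to 0$.

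For the ``only if'' direction, I would assume norm and modular convergence coincide and check $\Delta_{2,w}$ directly. Let $u_n\in L_M(\Omega)$ with $\int_\Omega M(x,u_n)\,dx\to 0$; this says precisely that $u_n\to 0$ modularly (with $\lambda=1$), so by hypothesis $\|u_n\|_{L_M(\Omega)}\to 0$, hence $\|2u_n\|_{L_M(\Omega)}=2\|u_n\|_{L_M(\Omega)}\to 0$. By the trivial half recorded in the first paragraph (norm $\Rightarrow$ modular), this forces $\int_\Omega M(x,2u_n)\,dx\to 0$, which is exactly the conclusion required by the $\Delta_{2,w}$-condition.

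I do not anticipate a genuine obstacle; the argument rests only on the monotonicity and convexity of $M(x,\cdot)$ together with the homogeneity of the Luxemburg norm. The two points deserving a little care are the reduction of the (a priori $\lambda$-dependent) modular convergence to the normalized case, and the observation that $\Delta_{2,w}$ may be iterated to produce dilations $2^k|v_n|$ with $k$ arbitrarily large, which is what bridges the modular bound to the norm bound.
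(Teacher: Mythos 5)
Your proof is correct and follows essentially the same route as the paper: for the ``only if'' part you use the norm--modular inequality coming from convexity and (\ref{eq1.7.1}), and for the ``if'' part you iterate the $\Delta_{2,w}$-condition to control $\int_\Omega M(x,2^k|v_n|)\,dx$ and pass from an arbitrary modular scale $\lambda$ to smallness of the Luxemburg norm. You are merely a bit more explicit than the paper about the trivial half and the final step from $\int_\Omega M(x,|v_n|/\e)\,dx\leq 1$ to $\|v_n\|_{L_M(\Omega)}\leq\e$, which is fine.
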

\begin{proof}
	Suppose that the norm convergence and the modular convergence are equivalent and let $u_n\in L_{M}(\Omega)$ be such that $\displaystyle\int_{\Omega}M(x,u_n)dx\rightarrow 0$. Then, $\|u_n\|_{L_{M}(\Omega)}\to0$ as $n\rightarrow \infty$. By the definition of the norm, we can get
	$$
	\int_{\Omega}M(x,2u_n)dx\leq2\|u_n\|_{L_{M}(\Omega)}.
	$$
	Thus, $\displaystyle\int_{\Omega}M(x,2u_n)dx\rightarrow 0$ as $n\rightarrow\infty$.\\
	Conversely, let $u_n\in L_{M}(\Omega)$ and suppose that $\displaystyle\int_{\Omega}M\Big(x,\frac{|u_n|}{\lambda}\Big)dx\rightarrow 0$ for some $\lambda>0$. We shall prove that $\displaystyle\int_{\Omega}M(x,\mu |u_n|)dx\rightarrow 0$ as $n\rightarrow\infty$ for every $\mu>0$. Let $m\in\mathrm{N}$ be such that $\lambda\mu\leq 2^m$. Then, we can write
	$$
	\int_{\Omega}M(x,\mu |u_n|)dx\leq\frac{\lambda\mu}{2^m}\int_{\Omega}M\Big(x,2^m\frac{|u_n|}{\lambda}\Big)dx\rightarrow 0 \mbox{ as }n\rightarrow\infty.
	$$
\end{proof}
\begin{lemma}
	Let $M\in\Phi$. If $M\in\Delta_2$, then the norm convergence and the modular convergence are equivalent.
\end{lemma}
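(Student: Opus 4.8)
The plan is to reduce the statement to the preceding lemma: since that lemma characterises the equivalence of the norm and modular convergences by the membership $M\in\Delta_{2,w}$, it suffices to show that $M\in\Delta_2$ forces $M\in\Delta_{2,w}$. Accordingly, I would start from an arbitrary sequence $u_n\in L_M(\Omega)$ with $\int_\Omega M(x,|u_n(x)|)\,dx\to0$ and work towards $\int_\Omega M(x,2|u_n(x)|)\,dx\to0$.

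From \eqref{delta2} one has the pointwise bound $M(x,2|u_n(x)|)\le kM(x,|u_n(x)|)+h(x)$ for a.e. $x\in\Omega$. Integrating this directly only yields $\limsup_n\int_\Omega M(x,2|u_n|)\,dx\le\int_\Omega h\,dx$, which is not yet the assertion, so the constant term $\int_\Omega h\,dx$ must be absorbed by a pointwise-convergence argument. To this end I would extract a subsequence $(u_{n_k})_k$ with $\int_\Omega M(x,|u_{n_k}(x)|)\,dx\le 2^{-k}$; then $\sum_k\int_\Omega M(x,|u_{n_k}(x)|)\,dx<\infty$, so by monotone convergence $\sum_k M(x,|u_{n_k}(x)|)<\infty$ for a.e. $x$, hence $M(x,|u_{n_k}(x)|)\to0$ a.e. Since $M(x,\cdot)$ is nondecreasing with $M(x,s)>0$ for $s>0$, this forces $|u_{n_k}(x)|\to0$ a.e., and then, using $M(x,0)=0$ together with $M(x,s)\to0$ as $s\to0^+$ (which holds because $\lim_{s\to0^+}M(x,s)/s=0$ for $M\in\Phi$), also $M(x,2|u_{n_k}(x)|)\to0$ a.e.

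Now set $f_k:=M(\cdot,2|u_{n_k}|)$ and $g_k:=kM(\cdot,|u_{n_k}|)+h$, so that $0\le f_k\le g_k$ by \eqref{delta2}, with $f_k\to0$ a.e., $g_k\to h$ a.e., each $g_k\in L^1(\Omega)$ (as $h\in L^1(\Omega)$ and $\int_\Omega M(x,|u_{n_k}|)\,dx<\infty$), and $\int_\Omega g_k\,dx=k\int_\Omega M(x,|u_{n_k}|)\,dx+\int_\Omega h\,dx\to\int_\Omega h\,dx$. Applying Fatou's lemma to the nonnegative functions $g_k-f_k$ and using $f_k\ge0$, one gets $\int_\Omega h\,dx=\int_\Omega\liminf_k(g_k-f_k)\,dx\le\liminf_k\int_\Omega(g_k-f_k)\,dx=\int_\Omega h\,dx-\limsup_k\int_\Omega f_k\,dx$, whence $\int_\Omega f_k\,dx\to0$. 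A routine subsequence argument then upgrades this to the full sequence: any subsequence of $\big(\int_\Omega M(x,2|u_n|)\,dx\big)_n$ contains, by the step just carried out, a further subsequence tending to $0$, so the sequence itself tends to $0$. This proves $M\in\Delta_{2,w}$, and the preceding lemma yields the equivalence of the two convergences. The main obstacle is exactly the constant $\int_\Omega h\,dx$ left over by the naive integration of \eqref{delta2}: removing it genuinely requires the a.e.\ vanishing of $|u_{n_k}|$ and a variable-majorant (Pratt-type) form of dominated convergence rather than a one-line modular estimate.
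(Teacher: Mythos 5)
Your proposal is correct, but it is organized differently from the paper's argument. The paper proves the nontrivial implication directly: starting from $\int_\Omega M(x,|u_n|/\lambda)\,dx\to 0$, it iterates \eqref{delta2} to get $M(x,2^p|u_n|/\lambda)\leq k^pM(x,|u_n|/\lambda)+(k^{p-1}+\cdots+1)h(x)$, extracts an a.e.\ convergent subsequence, applies Vitali's theorem to conclude $\int_\Omega M(x,2^p|u_n|/\lambda)\,dx\to 0$, and then scales by convexity to obtain $\int_\Omega M(x,\mu|u_n|)\,dx\to 0$ for every $\mu>0$, hence norm convergence. You instead prove only the single doubling step, namely that $\Delta_2$ implies $\Delta_{2,w}$ (via the summable-subsequence trick, a.e.\ vanishing of $|u_{n_k}|$, and a Pratt-type generalized dominated convergence carried out with Fatou applied to $g_k-f_k$), and then invoke the preceding lemma characterizing the equivalence of the two convergences by $\Delta_{2,w}$. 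The analytic core is the same in both cases -- a.e.\ convergence along a subsequence together with a convergence theorem strong enough to absorb the additive term $\int_\Omega h\,dx$ from \eqref{delta2} -- but your route needs only one application of \eqref{delta2} (the iteration to arbitrary scalars being delegated to the $\Delta_{2,w}$ lemma), replaces Vitali by Fatou/Pratt, and makes explicit the sub-subsequence argument that upgrades the conclusion to the full sequence, a point the paper passes over silently when it relabels the a.e.\ convergent subsequence by $n$. What the paper's version buys in exchange is self-containedness: it yields the norm estimate directly without relying on the characterization lemma. Your appeal to $\lim_{s\to 0^+}M(x,s)/s=0$ to get $M(x,s)\to 0$ as $s\to 0^+$ is the right way to handle the possibility $M(x,1)=+\infty$, and all the integrability requirements ($g_k\in L^1$, $\int_\Omega g_k\,dx\to\int_\Omega h\,dx<\infty$) are verified, so there is no gap.
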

This lemma was proved in \cite[Theorem 8.14]{MJ} using the local integrability (see \cite[Definition 7.5]{MJ}). This local integrability is not necessary, we give here a proof based on Vitali's theorem. 
\begin{proof}
	We only need to prove that the modular convergence implies the norm convergence, the converse is an easy task. Let $\{u_n\}$ be a sequence of functions belonging to $L_M(\Omega)$ such that
	$\displaystyle\int_\Omega M(x,u_n(x)/\lambda)\rightarrow 0$ as $n\to\infty$, for some $\lambda>0$. Thus, $M(x,u_n(x)/\lambda)\rightarrow 0$ strongly in $L^1(\Omega)$. Hence, for a subsequence still indexed by $n$, we can assume that $u_n\to u$ a.e. in $\Omega$. Let $p$ be a fixed integer, by the $\Delta_2$-condition we can write
	$$
	M(x,2^p u_n(x)/\lambda)\leq k^p M(x, u_n(x)/\lambda)+ (k^{p-1}+\cdots+k+1) h(x).
	$$
	Therefore, by Vitali's theorem we get $\displaystyle\lim _{n\rightarrow\infty}\int_\Omega M(x,2^p u_n)= 0$. For an arbitrary $\lambda>0$, there exists $m$ such that $\lambda\leq 2^m$. Then we can write
	$$
	\int_\Omega M(x,\lambda u_n)dx\leq \frac{\lambda}{2^m} \int_\Omega M(x,2^m u_n)dx\rightarrow 0 \mbox{ as } n\rightarrow \infty,
	$$
	which gives $\|u_n\|_{L_M(\Omega)}\rightarrow 0$ as $n\rightarrow \infty$.
\end{proof}
\par Note that $E_M(\Omega)$  is a closed subset of $L_M(\Omega)$. Indeed, let $\{u_n\}\subset E_M(\Omega)$ such that
$u_n\rightarrow u\in L_M(\Omega)$. For any $\lambda>0$, we have $\displaystyle\int_\Omega M(x,2\lambda|u_n-u|)dx\rightarrow 0$. This implies
$M(x,2\lambda|u_n-u|)\rightarrow 0$ in $L^1(\Omega)$. So that, there is $h\in L^1(\Omega)$ such that $M(x,2\lambda|u_n-u|)\leq h$. Hence,
$$
\lambda|u_n(x)-u(x)|\leq\frac{1}{2}M^{-1}(x,h(x)),
$$
which yields
$$
\lambda|u(x)|\leq\lambda|u_n(x)|+\frac{1}{2}M^{-1}(x,h(x)).
$$
By the convexity of $M$, we get
$$
M(x,\lambda|u(x)|)\leq \frac{1}{2} M(x,2\lambda|u_n(x)|)+\frac{1}{2}h(x).
$$
Thus
$$
\int_\Omega M(x,\lambda|u(x)|)dx\leq \frac{1}{2}\int_\Omega M(x,2\lambda|u_n(x)|)dx+\frac{1}{2}\int_\Omega h(x)dx<\infty.
$$
So that $u\in E_M(\Omega)$.
\begin{lemma}\label{delta2equivalence}
	Let $M\in\phi$, then the following assertions are equivalent
	\begin{enumerate}
		\item [i)]  $E_M(\Omega)=L_M(\Omega)$.
		\item [ii)] $M\in\Delta_2$.
	\end{enumerate}
\end{lemma}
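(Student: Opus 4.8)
The plan is to prove the two implications separately. The implication (ii)$\Rightarrow$(i) is a routine iteration of the inequality (\ref{delta2}); the implication (i)$\Rightarrow$(ii) I would establish in contrapositive form, by constructing explicitly a function that lies in $L_M(\Omega)$ but not in $E_M(\Omega)$, and this is where the substance of the proof lies.

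For (ii)$\Rightarrow$(i): since $E_M(\Omega)\subseteq L_M(\Omega)$ by definition, only the reverse inclusion needs an argument. Let $u\in L_M(\Omega)$, so $\int_\Omega M(x,|u(x)|/\lambda)\,dx<\infty$ for some $\lambda>0$, and fix an arbitrary $\mu>0$; pick $p\in\N$ with $2^p\geq\lambda/\mu$. Iterating (\ref{delta2}) $p$ times gives
$$
M(x,2^p s)\leq k^p M(x,s)+\big(k^{p-1}+\cdots+k+1\big)h(x)
$$
for a.e.\ $x$ and all $s\geq0$. Taking $s=|u(x)|/\lambda$, using that $M(x,\cdot)$ is nondecreasing together with $|u(x)|/\mu\leq 2^p|u(x)|/\lambda$, and integrating over $\Omega$ while recalling $h\in L^1(\Omega)$, one obtains $\int_\Omega M(x,|u(x)|/\mu)\,dx<\infty$. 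As $\mu>0$ was arbitrary, $u\in E_M(\Omega)$, so $L_M(\Omega)=E_M(\Omega)$.

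For (i)$\Rightarrow$(ii) I would argue by contraposition: assuming $M\notin\Delta_2$, I produce $u\in L_M(\Omega)\setminus E_M(\Omega)$; write $\rho(v)=\int_\Omega M(x,|v(x)|)\,dx$. If $M(x,\cdot)$ equals $+\infty$ on a set of positive measure, a witness is immediate: halving the argument finitely many times one finds $\sigma>0$ and a set $A$ with $0<|A|<\infty$, $M(\cdot,\sigma/2)$ bounded on $A$ but $M(\cdot,\sigma)=+\infty$ on $A$, and then $u=(\sigma/2)\chi_A$ has $\rho(u)<\infty$, $\rho(2u)=+\infty$. So we may assume $M(x,\cdot)$ is finite-valued (hence continuous) for a.e.\ $x$. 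Negating (\ref{delta2}) with $h\equiv0$ and $k=2^n$, for each $n\in\N$ there are $t_n>0$ (allowed to depend measurably on $x$, via a measurable selection) and a set of positive measure on which $M(x,2t_n)>2^nM(x,t_n)$; shrinking it (intersection with a ball and with a sublevel set of $M(\cdot,t_n)$, which keeps positive measure since $M(\cdot,t_n)<\infty$ a.e.) we obtain $E_n$ with $0<|E_n|<\infty$, $0<\int_{E_n}M(x,t_n)\,dx<\infty$, and $M(x,2t_n)>2^nM(x,t_n)$ a.e.\ on $E_n$. The point is then to select pairwise disjoint $F_n\subseteq E_n$ with $\sum_n\int_{F_n}M(x,t_n)\,dx<\infty$ but $\sum_n 2^n\int_{F_n}M(x,t_n)\,dx=+\infty$ (e.g.\ with $\int_{F_n}M(x,t_n)\,dx\asymp 1/n^2$, since $\sum_n 2^n/n^2=+\infty$); then $u:=\sum_n t_n\chi_{F_n}$ satisfies, by disjointness, $\rho(u)=\sum_n\int_{F_n}M(x,t_n)\,dx<\infty$, so $u\in L_M(\Omega)$, while $\rho(2u)=\sum_n\int_{F_n}M(x,2t_n)\,dx\geq\sum_n 2^n\int_{F_n}M(x,t_n)\,dx=+\infty$, so $u\notin E_M(\Omega)$ (scaling parameter $1/2$ in the definition of $E_M$). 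This proves $E_M(\Omega)\neq L_M(\Omega)$.

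The step I expect to be the main obstacle is exactly this extraction of the disjoint sets $F_n$ with enough mass, which needs care on two fronts. First, the failure sets $E_n$ for the successive levels $2^n$ may overlap or be nested, so disjointness must be arranged using the non-atomicity of Lebesgue measure --- for instance by a greedy choice with the measures $|F_n|$ decreasing fast enough, or by replacing the $E_n$ by the pairwise disjoint shells $G_n\setminus G_{n+1}$, where $G_n$ is the set on which (\ref{delta2}) fails at level $2^n$ (so $G_1\supseteq G_2\supseteq\cdots$). Second, one must ensure the chosen pieces carry $M(\cdot,t_n)$-mass of order at least $1/n^2$, so that the weighted series diverges: this is provided by the concentrated part $\bigcap_n G_n$ when it has positive measure, and otherwise one has to run the construction more carefully, using the negation of the $\Delta_2$-condition with a suitably chosen $L^1$ perturbation $h$ that absorbs the mass already committed at earlier stages. (The statement is classical in the Orlicz setting; the content here is to carry it out assuming no integrability on $M$ beyond what $\Delta_2$ itself grants.)
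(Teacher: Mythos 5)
Your implication (ii)$\Rightarrow$(i) is correct and is essentially the paper's own argument (iterate (\ref{delta2}) $p$ times, use convexity/monotonicity and $h\in L^1(\Omega)$). The gap is in (i)$\Rightarrow$(ii), and it is exactly where you suspect it, but it is not a technicality you can defer: the negation of the $\Delta_2$-condition says that (\ref{delta2}) fails for \emph{every} pair $(k,h)$ with $0\leq h\in L^1(\Omega)$, whereas the construction you actually specify only uses its failure for $h\equiv 0$ and $k=2^n$, i.e.\ the existence of levels $t_n$ and positive-measure sets with $M(x,2t_n)>2^nM(x,t_n)$. This weaker statement cannot yield $E_M(\Omega)\neq L_M(\Omega)$. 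Concretely, take $\Omega$ bounded and $M(x,t)=M(t)$ an $N$-function with $M(t)=e^{-1/t}$ near $0$ (convex there) and satisfying $M(2t)\leq cM(t)$ for $t\geq t_0$: by the paper's remark following the definition of the $\Delta_2$-condition, such an $M$ belongs to $\Delta_2$ (with $h$ a constant), so by the other implication $E_M(\Omega)=L_M(\Omega)$; yet for every $n$ there are $t_n\to 0$ violating the inequality at level $2^n$ on all of $\Omega$, so your sets $E_n$, $G_n$ and even $\bigcap_n G_n$ have full measure. Moreover any admissible $t_n$ satisfies $2^nM(t_n)<M(2t_n)<2^{-n}$, hence $\sum_n 2^n\int_{F_n}M(x,t_n)\,dx\leq|\Omega|\sum_n 2^{-n}<\infty$ for \emph{every} choice of sets $F_n$: the divergence you need (mass $\asymp n^{-2}$) can never be arranged, and the fallback via $\bigcap_nG_n$ gives no help. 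So any proof that runs to completion from the $h\equiv0$ negation alone is necessarily wrong; the ``suitably chosen $L^1$ perturbation $h$ that absorbs the mass already committed'' is not a refinement but the entire content of the implication, and your proposal does not carry it out.

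For comparison, the paper does not carry out this construction either: it observes that under (i) the finiteness of $\int_\Omega M(x,|u|)\,dx$ forces that of $\int_\Omega M(x,2|u|)\,dx$, i.e.\ one has the inclusion into the class generated by $\widetilde{M}(x,t):=M(x,2t)$, and then quotes Musielak's inclusion theorem \cite[Theorem 8.5 (b)]{MJ}, which for a pair of Musielak--Orlicz functions produces exactly $k>0$ and $0\leq h\in L^1(\Omega)$ with $M(x,2t)\leq kM(x,t)+h(x)$. What you are attempting is a self-contained proof of that theorem's hard direction; to complete it you would have to negate (\ref{delta2}) against a sequence of $L^1$ functions built recursively from the pieces already chosen (producing disjoint sets $F_n$ of finite measure and measurable level functions $t_n(\cdot)$ with $\int_{F_n}M(x,t_n(x))\,dx\leq 2^{-n}$ but $\int_{F_n}M(x,2t_n(x))\,dx\geq 1$), and also repair two smaller inconsistencies: if the levels are obtained by measurable selection they depend on $x$, so the final function must be $u=\sum_n t_n(\cdot)\chi_{F_n}$ rather than a sum of constants, and the measurability of the relevant suprema in $t$ needs an argument. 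As written, the proposal has a genuine gap at the decisive step; either complete the construction in this stronger form or cite Musielak's theorem as the paper does.
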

\begin{proof}
	
	i)$\Rightarrow$ ii) For any $u\in L_M(\Omega)$ we have $2u\in L_M(\Omega)$. Then it follows
	$$
	L_{M}(\Omega)\subset L_{\widetilde{M}}(\Omega),\; \mbox{ where }\; \widetilde{M}(x,u)=M(x,2u).
	$$
	Therefore, by \cite[Theorem 8.5 (b)]{MJ} there exist a constant $k>0$ and a nonnegative function $h\in L^1(\Omega)$ such that
	$$
	\widetilde{M}(x,u)=M(x,2u)\leq kM(x,u)+h(x).
	$$
	
	ii)$\Rightarrow$ i) For $u\in L_M(\Omega)$ there exists $\lambda>0$ such that $\displaystyle\int_\Omega M(x,|u(x)|/\lambda)dx<\infty$. We shall prove that for every $\mu>0$ we have $\displaystyle\int_\Omega M(x,|u(x)|/\mu)dx<\infty$. Indeed, there exists an integer $m$ such that  $\frac{\lambda}{\mu}\leq 2^m$. Thus, we can write
	$$
	\begin{array}{lll}\displaystyle
	\int_\Omega M(x,|u(x)|/\mu)dx \\
	\leq\displaystyle\frac{\lambda}{2^m\mu}\int_\Omega  M\Big(x,\frac{2^m|u(x)|}{\lambda}\Big)dx\\
	\leq\displaystyle\frac{\lambda}{2^m\mu}  \Bigg(k^m\int_\Omega M\Big(x,\frac{|u(x)|}{\lambda}\Big)dx+(k^{m-1}+\cdots+k+1)\int_\Omega h(x)dx \Bigg)
	<\infty.
	\end{array}
	$$
\end{proof}
Let $J$ stands for the Friedrichs mollifier kernel
defined on $\mathrm{R}^{N}$ by
$$
J(x)= ke^{-\frac{1}{1-\|x\|^2}}  \hbox{ if } \|x\|<1 \mbox{ and } 0  \hbox{ if } \|x\|\geq 1,
$$
where $k>0$ is such that $\int_{\mathrm{R}^{N}}J(x)dx=1$. For $\varepsilon>0,$ we  define $J_\varepsilon(x)=\varepsilon^{-N}J(x\varepsilon^{-1})$ and $u_\varepsilon=J_\varepsilon\ast u$ by
\begin{equation}\label{eq11}
u_\varepsilon(x)=\int_{\mathrm{R}^N}J_\varepsilon(x-y)u(y)dy
=\int_{B(0,1)}u(x-\varepsilon y)J(y)dy.
\end{equation}
\begin{example} As examples of $\Phi$-functions, we give
	$$
	\begin{array}{lll}
	M_1(x,s)= |s|^{p(x)}, \;1<p(\cdot)<\infty,\\  M_2(x,s)=|s|^{p(x)}\log(e+|s|), \; 1<p(\cdot)<\infty,\\
	M_3(x,s)= \frac{1}{p(x)}[(1+|s|^2)^{\frac{p(x)}{2}} -1], \;1<p(\cdot)<\infty,\\ M_4(x,s)= |s|^{p} +a(x)|s|^{q}, \;1< p< q, 0\leq a(\cdot)\in L^{1}_{loc}(\Omega),\\
	M_5(x,s)= e^{|s|^{p(x)}} -1 ,\;1<p(\cdot)<\infty, \\
	\end{array}
	$$
	In the last two decades, a wide literature was grown up for the account of the first $\Phi$-function $M_1$ which leads to variable exponent Lebesgue spaces. The $\Phi$-function $M_2$ arises particularly in plasticity when $p(\cdot)$ is a constant number. Observe that $M_4\in\Delta_2$ and if $p^+:=ess\sup_{x\in\Omega}p(x)<+\infty$, the $\Phi$-functions $M_i$, $1\leq i\leq3$, satisfy the $\Delta_2$-condition while it is no longer the case for $M_5$.
\end{example}
\begin{remark} Going back to the local integrability (\ref{incBM}).\\
	If $p^+={\rm ess}\sup_{x\in \Omega}p(x)<+\infty$, it is obvious to check that the $\Phi$-functions $M_i$, $i\in\{1,2,3,5\}$, above satisfy the assumption (\ref{incBM}). Furthermore, if $a\in L^{1}_{loc}(\Omega)$ then the $\Phi$-function $M_4$ satisfies (\ref{incBM}). 
\end{remark}
The equivalence between Orlicz and Luxemburg norms is well-known in the Orlicz spaces setting \cite[Theorem 3.8.5]{KJF}, while in the Musielak-Orlicz framework this result was proved by Musielak \cite[Theorem 13.11]{MJ} using a local integrability condition upon measurable sets with finite measure.
\begin{lemma}\label{lem1.1.6}
	Let $M\in\Phi$ satisfy (\ref{incBM}). Then, for all $u\in L_M(\Omega)$
	\begin{equation}\label{eq1.3.3}
	\|u\|_{L_M(\Omega)}\leq\|u\|_M\leq 2\|u\|_{L_M(\Omega)}
	\end{equation}
\end{lemma}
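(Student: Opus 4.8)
The plan is to prove the two inequalities of (\ref{eq1.3.3}) separately: the upper one is soft, while the lower one is where the hypothesis (\ref{incBM}) is genuinely used. For $\|u\|_M\leq 2\|u\|_{L_M(\Omega)}$, by homogeneity of both norms I may assume $\|u\|_{L_M(\Omega)}\leq 1$, so that $\int_\Omega M(x,|u(x)|)\,dx\leq 1$ by (\ref{eq1.7.1}); for any $v$ with $\|v\|_{L_{\overline{M}}(\Omega)}\leq 1$ one likewise has $\int_\Omega\overline{M}(x,|v(x)|)\,dx\leq 1$, hence the Young inequality (\ref{younginequality}) gives $\int_\Omega|u(x)v(x)|\,dx\leq\int_\Omega M(x,|u|)\,dx+\int_\Omega\overline{M}(x,|v|)\,dx\leq 2$, and taking the supremum over such $v$ yields $\|u\|_M\leq 2$.

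For the lower bound $\|u\|_{L_M(\Omega)}\leq\|u\|_M$, I first treat $u\in E_M(\Omega)$, using the representation $M(x,t)=\int_0^t a(x,s)\,ds$. I may assume $\|u\|_M=1$ (the case $u=0$ is trivial) and set $v(x)=a(x,|u(x)|)$. The equality case of (\ref{younginequality}) reads $|u(x)|\,v(x)=M(x,|u(x)|)+\overline{M}(x,v(x))$, and since $M(x,2s)\geq M(x,s)+\int_s^{2s}a(x,r)\,dr\geq M(x,s)+s\,a(x,s)$ by monotonicity of $a(x,\cdot)$, we obtain the pointwise estimate $\overline{M}(x,v(x))=|u(x)|\,a(x,|u(x)|)-M(x,|u(x)|)\leq M(x,2|u(x)|)$; as $u\in E_M(\Omega)$ the right-hand side is integrable, so $v\in L_{\overline{M}}(\Omega)$ and integrating the equality gives $\int_\Omega|u|v\,dx=\int_\Omega M(x,|u|)\,dx+\int_\Omega\overline{M}(x,v)\,dx<\infty$. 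Now H\"older's inequality (\ref{eq1.1.4}) together with $\|u\|_M=1$ gives $\int_\Omega|u|v\,dx\leq\|v\|_{L_{\overline{M}}(\Omega)}$, while a short convexity argument (testing the value $\lambda=\int_\Omega\overline{M}(x,v)\,dx$ in the Luxemburg norm when this quantity exceeds $1$) shows $\|v\|_{L_{\overline{M}}(\Omega)}\leq\max\{1,\int_\Omega\overline{M}(x,v)\,dx\}$; subtracting, $\int_\Omega M(x,|u|)\,dx=\int_\Omega|u|v\,dx-\int_\Omega\overline{M}(x,v)\,dx\leq\max\{1,\int_\Omega\overline{M}(x,v)\,dx\}-\int_\Omega\overline{M}(x,v)\,dx\leq 1$, which by (\ref{eq1.7.1}) is precisely $\|u\|_{L_M(\Omega)}\leq 1=\|u\|_M$; rescaling removes the normalization.

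Finally I would pass to a general $u\in L_M(\Omega)$ by truncation. First, $\|\cdot\|_M$ is monotone for the pointwise order on moduli, directly from its definition as a supremum. Taking a compact exhaustion $K_n\uparrow\Omega$ and setting $u_n=u\,\chi_{K_n}\,\chi_{\{|u|\leq n\}}$, one has $u_n\in\mathcal{B}_c(\Omega)\subset E_M(\Omega)$ by (\ref{incBM}), so the previous step and monotonicity give $\|u_n\|_{L_M(\Omega)}\leq\|u_n\|_M\leq\|u\|_M$; when $\|u\|_M<\infty$ this reads $\int_\Omega M(x,|u_n|/\|u\|_M)\,dx\leq 1$ for all $n$, and since $|u_n|\uparrow|u|$ a.e. the monotone convergence theorem gives $\int_\Omega M(x,|u|/\|u\|_M)\,dx\leq 1$, hence $\|u\|_{L_M(\Omega)}\leq\|u\|_M$ (there being nothing to prove if $\|u\|_M=\infty$). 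The one step I expect to require care is verifying that the conjugate function $v=a(\cdot,|u|)$ is an admissible test function in the Orlicz norm, i.e. that $v\in L_{\overline{M}}(\Omega)$; this rests on the bound $\overline{M}(x,a(x,s))\leq M(x,2s)$ and on the membership of the truncations in $E_M(\Omega)$, which is itself exactly what the local integrability assumption (\ref{incBM}) delivers.
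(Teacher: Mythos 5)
Your proof is correct and takes essentially the same route as the paper: the upper bound is the same easy consequence of the Young inequality (\ref{younginequality}), and your lower-bound argument -- truncating into $\mathcal{B}_c(\Omega)\subset E_M(\Omega)$ via (\ref{incBM}), testing the Orlicz norm with the conjugate density $v=a(\cdot,|u|)$ through the equality case of Young, normalizing by $\max\{1,\int_\Omega\overline{M}(x,v)\,dx\}$, and passing to the limit by monotone convergence -- is precisely the classical argument of \cite[Lemma 3.7.2]{KJF} that the paper invokes, only written out in full.
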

\begin{proof}
	The inequality in the right-hand side is an easy consequence of the Young inequality. We only need to prove the left-hand side inequality. To this end, it is sufficient to prove that
	$$
	\int_\Omega M(x, |u(x)|/\|u\|_M)dx\leq 1.
	$$
	This can be done by using (\ref{incBM}) and following exactly the line of \cite[Lemma 3.7.2]{KJF}.
\end{proof}
\section{Properties of the translation operator}\label{sec2}
\subsection{M-mean continuity}
For $h\in\mathrm{R}^N$, let $\tau_h u$ stands for the translation operator defined by
$$
\tau_h u(x)=
\left\{
\begin{array}{lll}
u(x+h)& \mbox{ if } x\in \Omega \mbox{ and } x+h\in \Omega, \\
0 &\mbox{ otherwise in }\mathrm{R}^N.
\end{array}
\right.
$$
If the function $u$ has a compact support, $\tau_h u$ is well-defined provided that $h<dist(supp\; u,\partial\Omega)$.
In general, if $u\in E_M(\Omega)$ we can't expect that $\tau_h u$ belongs to $E_M(\Omega)$ (see \cite[Example 2.9 and Theorem 2.10]{KR}). In the following lemma, we prove that the translation operator acts on the set of bounded functions compactly supported in $\Omega$. In the case where $M(x,t)=|t|^{p(x)}$, a similar result was proved in \cite[p.261]{CF} by using the continuous imbedding between variable exponent Lebesgue spaces.
Unfortunately, this result is not true in general in variable Lebesgue spaces \cite[Proposition 3.6.1]{DHHM} unless the exponent is constant.
\begin{lemma}\label{lem6.1}
	Let $M\in\Phi$ satisfy (\ref{incBM}). Then, any $u\in \mathcal{B}_c(\Omega)$ is M-mean continuous, that is to say for every $\varepsilon>0$ there exists a $\eta=\eta(\varepsilon)>0$ such that for $h\in\mathrm{R}^N$ with $|h|<\eta$ we have
	$$
	\|\tau_h u-u \|_{L_M(\Omega)}<\varepsilon.
	$$
\end{lemma}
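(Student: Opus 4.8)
The plan is to prove the slightly stronger modular statement that, for every $\varepsilon>0$,
$$
\int_\Omega M\!\left(x,\frac{|\tau_h u(x)-u(x)|}{\varepsilon}\right)dx\ \longrightarrow\ 0\qquad\text{as }h\to 0 ,
$$
and then to deduce the lemma from \eqref{eq1.7.1}: given $\varepsilon>0$, apply this limit with $\varepsilon$ replaced by $\varepsilon/2$; for $|h|$ small enough the resulting modular $\int_\Omega M(x,2|\tau_h u-u|/\varepsilon)\,dx$ is $\leq 1$, so \eqref{eq1.7.1} gives $\|\tau_h u-u\|_{L_M(\Omega)}\leq\varepsilon/2<\varepsilon$.

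First I would fix the geometry so that the local integrability \eqref{incBM} can be used on one fixed compact set. Set $C=\|u\|_{L^\infty(\Omega)}$, let $K=\operatorname{supp}u\subset\Omega$ be compact, $d=\operatorname{dist}(K,\partial\Omega)\in(0,\infty]$, $d_0=\min\{d/2,1\}>0$, and $K'=\{x\in\mathbb R^N:\operatorname{dist}(x,K)\leq d_0\}$, which is a compact subset of $\Omega$. Extending $u$ by $0$ outside $\Omega$, one checks directly from the definition of $\tau_h$ that for $|h|<d_0$ one has $\tau_h u(x)=u(x+h)$ for every $x\in\mathbb R^N$, that $\operatorname{supp}(\tau_h u-u)\subset K'$, and that $|\tau_h u-u|\leq 2C$ there. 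Therefore, for $|h|<d_0$,
$$
M\!\left(x,\frac{|\tau_h u(x)-u(x)|}{\varepsilon}\right)\ \leq\ M\!\left(x,\frac{2C}{\varepsilon}\right)\mathbf{1}_{K'}(x)\ =:\ G_\varepsilon(x),
$$
and $G_\varepsilon\in L^1(\Omega)$ by \eqref{incBM}; in particular $M(x,2C/\varepsilon)<\infty$ for a.e.\ $x\in K'$.

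Next I would invoke the continuity of translation in $L^1$. Since $u\in\B_c(\Omega)\subset L^1(\mathbb R^N)$, we have $\|u(\cdot+h)-u\|_{L^1(\mathbb R^N)}\to0$ as $h\to0$, hence, by the identification above, every sequence $h_n\to0$ admits a subsequence (not relabelled) along which $\tau_{h_n}u\to u$ a.e.\ in $\Omega$. The function $M(x,\cdot)$ is continuous on $[0,\infty)$ — being convex and finite near the origin, as is also visible from its integral representation — with $M(x,0)=0$, so $M(x,|\tau_{h_n}u(x)-u(x)|/\varepsilon)\to0$ for a.e.\ $x\in\Omega$. The dominated convergence theorem, with majorant $G_\varepsilon$, then yields $\int_\Omega M(x,|\tau_{h_n}u-u|/\varepsilon)\,dx\to0$. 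Since the integrand is nonnegative and every sequence $h_n\to0$ has such a subsequence, the displayed modular limit follows, and with it the lemma.

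The main obstacle, and the reason the statement is restricted to $\B_c(\Omega)$ rather than $E_M(\Omega)$, is that the translation operator is in general neither continuous nor even well defined on $L_M(\Omega)$: the argument goes through only because boundedness together with compact support forces all the differences $\tau_h u-u$, for $|h|<d_0$, to be supported in one and the same compact set $K'\subset\Omega$, on which \eqref{incBM} supplies the single integrable majorant $G_\varepsilon$ needed for dominated convergence. The one point requiring a little care is the bookkeeping when $\Omega\neq\mathbb R^N$: one has to verify that for $|h|<d_0$ the zero-extension makes $\tau_h u$ coincide with the genuine translate $u(\cdot+h)$ and keeps its support inside $K'$, which is precisely what allows the simultaneous use of the $L^1$-continuity of translation on $\mathbb R^N$ and of the local integrability of $M(\cdot,2C/\varepsilon)$ over $K'$.
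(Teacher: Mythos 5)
Your proof is correct, but it takes a genuinely different route from the paper's. The paper works entirely by hand inside the modular: it fixes the compact neighbourhood $B_{R+1}\cap\Omega$ of the support, uses \eqref{incBM} to get absolute continuity of $\int M(x,C)\,dx$ over small-measure subsets, applies Luzin's theorem to find a closed set on which $u$ is uniformly continuous, and then splits $\int_\Omega M(x,|\tau_hu-u|)\,dx$ into the ``good'' set (controlled by uniform continuity) and a small-measure remainder (controlled by the bound $|\tau_hu-u|\leq 2c$ and absolute continuity), finally rescaling $u/\delta$ to pass from the modular to the norm. You instead delegate the measure-theoretic work to the classical continuity of translation in $L^1(\mathrm{R}^N)$: the zero-extension of $u$ is in $L^1$, so along any sequence $h_n\to 0$ a subsequence of $\tau_{h_n}u$ converges a.e., and dominated convergence with the single majorant $M(\cdot,2C/\varepsilon)\chi_{K'}$ (integrable by \eqref{incBM}) gives the modular limit, whence the norm statement via \eqref{eq1.7.1}. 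Both arguments exploit exactly the same structural facts -- boundedness, compact support, and \eqref{incBM} on one fixed compact neighbourhood -- and your bookkeeping that $\tau_hu$ coincides with the genuine translate and stays supported in $K'$ for $|h|<d_0$ is the right point to check; your version is shorter because the Luzin-type work is hidden inside the standard $L^1$-translation theorem, while the paper's is self-contained. Two small remarks: the conclusion from ``every sequence has an a.e.-convergent subsequence'' should strictly be phrased through the sub-subsequence principle (every subsequence of the modulars has a further subsequence tending to $0$, hence the full limit is $0$), which your argument supports; and since $M(x,\cdot)$ is allowed to take the value $+\infty$, global continuity on $[0,\infty)$ is not automatic -- but only continuity at $0$ is needed, and that follows from convexity, $M(x,0)=0$ and $\lim_{s\to0^+}M(x,s)/s=0$, so this does not affect the proof.
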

\begin{proof}
	For $u\in\mathcal{B}_c(\Omega)$, let $supp u=U\subset B_{R}\cap\Omega$, where by $B_{R}$ we denote a ball with radius $R>0$.
	Let $h\in\mathrm{R}^N$ with $|h|<\min(1,dist(U,\partial\Omega))$.  We have $supp\,\tau_h u\subset B:=B_{R+1}\cap\Omega$. Let us define $\overline{B}=\overline{B}_{R+1}\cap\Omega$ where $\overline{B}_{R+1}$ stands for the closed ball with radius $R+1$. Thanks to (\ref{incBM}), for any constant number $C>0$ one has $M(x,C)\in L^{1}(\overline{B})$. Therefore, for arbitrary $\varepsilon>0$, there is $\nu>0$ such that for all measurable subset $\Omega^\prime\subset\overline{B}$
	\begin{equation}\label{eq6.2}
	\int_{\Omega^\prime}M(x,C)dx<\frac{\varepsilon}{2}, \mbox{ whenever }|\Omega^\prime|<\nu.
	\end{equation}
	For this $\nu$, there exists $\rho\in(0,1)$ such that $|H_\rho|<\frac{\nu}{4}$ where
	$$
	H_\rho=\{x\in B: dist (x,\partial B)\leq\rho\}.
	$$
	\definecolor{xdxdff}{rgb}{0.49019607843137253,0.49019607843137253,1.}
	\definecolor{qqqqff}{rgb}{0.,0.,1.}
	\begin{center}
		\begin{tikzpicture}
		\draw [shift={(2.103496332518336,2.1273227383863027)}] plot[domain=-0.22101557782652304:2.9010177971452045,variable=\t]({1.*2.907220894342422*cos(\t r)+0.*2.907220894342422*sin(\t r)},{0.*2.907220894342422*cos(\t r)+1.*2.907220894342422*sin(\t r)});
		\draw (3.95,3.69) node[anchor=north west] {$B_R$};
		\draw [rotate around={50.38931175997341:(3.28,3.41)}] (3.28,3.41) ellipse (0.9910105994563201cm and 0.6444392975562363cm);
		\draw(3.36,3.51) circle (1.225887433657757cm);
		\draw (2.92,3.55) node[anchor=north west] {$U$};
		\draw (1.16,4.8) node[anchor=north west] {$\Omega$};
		\draw(3.36,3.51) circle (1.8054362353736002cm);
		\draw (3.92,2.0) node[anchor=north west] {$B_{R+1}$};
		\draw (1.5,3.5) node[anchor=north west] {$\rho$};
		\draw (1.7,2.8) node[anchor=north west] {$H_\rho$};
		\draw [shift={(3.4007292160767,3.576779380526849)}] plot[domain=2.0533407331940277:5.911369247541888,variable=\t]({1.*1.5836882111269457*cos(\t r)+0.*1.5836882111269457*sin(\t r)},{0.*1.5836882111269457*cos(\t r)+1.*1.5836882111269457*sin(\t r)});
		\draw (1.6084695767616795,3.0721173941904194)-- (1.8721969164861136,3.1624638495956776);
		\end{tikzpicture}
	\end{center}
	Define $U_\rho=B\setminus H_\rho$. Since $u$ is measurable on $U_\rho$, Luzin's theorem ensures that for $\nu>0$ there  exists a closed set $F_{1,\nu}\subset U_\rho$ such that the restriction of $u$ to $F_{1,\nu}$ is continuous and $|U_\rho \setminus F_{1,\nu}|<\frac{\nu}{4}$. We then have, $|B\setminus F_{1,\nu}|<\frac{\nu}{2}$. The function $u$ is uniformly continuous on the compact set $F_{1,\nu}$. It follows that for $\varepsilon>0$, there exists an $\eta\in(0,\rho)$ such that for all $x$, $x+h\in F_{1,\nu}$ one has
	\begin{equation}\label{cont}
	|h|<\eta \Rightarrow |u(x+h)-u(x)|<\frac{\varepsilon}{2\Big(\displaystyle\int_{\overline{B}}M(x,1)dx+1\Big)}.
	\end{equation}
	Define the two sets
	$$
	F_{2,\nu}=\{x\in U, x+h\in F_{1,\nu}\} \mbox{ and } F_{\nu}=F_{1,\nu}\cap F_{2,\nu}.
	$$
	The set $F_{\nu}$ is a closed subset of $\Omega$. In addition, we have $|B\setminus F_{\nu}|<\nu$. Indeed, since the Lebesgue measure is invariant by translation we get $|B\setminus F_{1,\nu}|=|B\setminus F_{2,\nu}|$. Therefore,
	\begin{equation}\label{eq6.3}
	|B\setminus F_{\nu}|=|(B\setminus F_{1,\nu})\cup(B\setminus F_{2,\nu})|\leq|B\setminus F_{1,\nu}|+|B\setminus F_{2,\nu}|<\nu.
	\end{equation}
	If $x\notin B$ then for $|h|<\eta$ we have
	$x+h\notin B_{R}\cap\Omega$. If not, we will get $x\in B$ which contradicts the fact that $x\notin B$. Hence, we obtain
	\begin{equation}\label{eq6.4}
	\begin{array}{lll}\displaystyle
	\int_{\Omega}M(x,|\tau_hu(x)-u(x)|)dx &=\displaystyle\int_{B}M(x,|\tau_hu(x)-u(x)|)dx \\
	&=\displaystyle\int_{B\cap F_{\nu}}M(x,|\tau_hu(x)-u(x)|)dx\\
	&=+\displaystyle\int_{B\setminus F_{\nu}} M(x,|\tau_hu(x)-u(x)|)dx.
	\end{array}
	\end{equation}
	By (\ref{cont}) the first term in the right-hand side can be estimated as
	$$
	\int_{B\cap F_{\nu}}M(x,|\tau_hu(x)-u(x)|)dx\leq \int_{B\cap F_{\nu}}M\bigg(x,\frac{\varepsilon}{2\Big(\displaystyle\int_{\overline{B}}M(x,1)dx+1\Big)}\bigg)dx<\frac{\varepsilon}{2}.
	$$
	As regards the second term in the right-hand side of (\ref{eq6.4}), we use the fact that $u\in\mathcal{B}_c(\Omega)$ is bounded by a constant number $c>0$ and then (\ref{eq6.2}) to obtain
	\begin{equation}\label{eq1.3.8}
	\int_{B\setminus F{\nu}}M(x,|\tau_hu(x)-u(x)|)dx\leq \int_{B\setminus F{\nu}}M(x,2c)dx\leq\frac{\varepsilon}{2}.
	\end{equation}
	Puting together (\ref{eq6.4}) and (\ref{eq1.3.8}), we get
	$$
	\forall \varepsilon>0, \exists \eta>0: |h|<\eta \Rightarrow \int_{\Omega}M(x,|\tau_hu(x)-u(x)|)dx<\varepsilon.
	$$
	Let $\delta>0$ be arbitrary but fixed. As $u/\delta\in\mathcal{B}_c(\Omega)$, we get
	$$
	\exists \eta>0 : |h|<\eta \Rightarrow \int_{\Omega}M\Big(x,\frac{|\tau_hu(x)-u(x)|}{\delta}\Big)dx\leq 1,
	$$
	which gives
	$$
	\|\tau_hu-u\|_{L_M(\Omega)}\leq \delta \mbox{ whenever } |h|<\eta.
	$$
\end{proof}
\begin{remark}\label{remark1}
	Note that the boundedness of the function $u$ in Lemma \ref{lem6.1} is necessary, else the result is false. Indeed, when we put ourselves in the particular case $M(x,t)=t^{p(x)}$, the authors \cite{KR} gave the following example : $N=1$, $\Omega=(-1,1)$. For $1\leq r<s<+\infty$ they define the variable exponent
	$$
	p(x)=\left\{
	\begin{array}{lll}
	r & \mbox{ if }x\in[0,1),\\
	s & \mbox{ if }x\in(-1,0)
	\end{array}
	\right.
	$$
	and consider
	$$
	f(x)=\left\{
	\begin{array}{lll}
	x^{-\frac{1}{s}} & \mbox{ if }x\in[0,1),\\
	0 & \mbox{ if }x\in(-1,0).
	\end{array}
	\right.
	$$
	They show that $\tau_{h}f\notin L^{p(\cdot)}(\Omega)$ although that $f\in L^{p(\cdot)}(\Omega)$. Observe here, in this example, that the function $f$ is compactly supported but not bounded on $\Omega$.
\end{remark}
\subsection{Application to approximate identities}
As a consequence of Lemma \ref{lem6.1}, we give the following  density result.
\begin{lemma}\label{lem3.4}
	Let $M\in\Phi$ satisfy (\ref{incBM}) and let $u\in\mathcal{B}_c(\Omega)$. For any $\varepsilon>0$ small enough, we have $u_\varepsilon\in \mathcal{C}^{\infty}_0(\Omega)$. Furthermore,
	$$
	\|u_\varepsilon-u\|_{L_M(\Omega)}\rightarrow 0 \mbox{ as } \varepsilon\rightarrow 0^+.
	$$
\end{lemma}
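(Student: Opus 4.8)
The statement splits into a regularity claim and a convergence claim, and the plan is to treat them in that order, with all the real weight resting on Lemma \ref{lem6.1}.

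The inclusion $u_\varepsilon\in\mathcal{C}^\infty_0(\Omega)$ is the classical mollification fact and uses nothing about $M$. Write $U=\operatorname{supp}u$, a compact subset of $\Omega$, and $\varepsilon_1=\operatorname{dist}(U,\partial\Omega)>0$. For $0<\varepsilon<\varepsilon_1$ the integrand in (\ref{eq11}) is nonzero only when $x-\varepsilon y\in U$ with $|y|<1$, so $\operatorname{supp}u_\varepsilon$ is contained in the closed $\varepsilon$-neighbourhood of $U$, which is a compact subset of $\Omega$; since $J_\varepsilon\in\mathcal{C}^\infty$ and $u$ is bounded with compact support, differentiation under the integral sign shows $u_\varepsilon\in\mathcal{C}^\infty$. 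Moreover $\|u_\varepsilon\|_\infty\le\|u\|_\infty$, so $u_\varepsilon\in\mathcal{B}_c(\Omega)$, and by (\ref{incBM}) both $u$ and $u_\varepsilon$ lie in $E_M(\Omega)$; in particular $\|u_\varepsilon-u\|_{L_M(\Omega)}$ is finite, so the second assertion is meaningful.

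For the convergence I fix $\delta>0$ and aim to prove $\|u_\varepsilon-u\|_{L_M(\Omega)}\le\delta$ for every sufficiently small $\varepsilon$. Since $\int_{B(0,1)}J(y)\,dy=1$, for $0<\varepsilon<\varepsilon_1$ we have $u_\varepsilon(x)-u(x)=\int_{B(0,1)}\bigl(u(x-\varepsilon y)-u(x)\bigr)J(y)\,dy$, and the compact support of $u$ ensures that on the set where this integrand does not vanish both $x$ and $x-\varepsilon y$ belong to $\Omega$, so that $u(x-\varepsilon y)=\tau_{-\varepsilon y}u(x)$ in the sense of Section \ref{sec2} (this is exactly the point of working in $\mathcal{B}_c(\Omega)$). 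Using that $M(x,\cdot)$ is nondecreasing and applying Jensen's inequality for the probability measure $J(y)\,dy$ to the \emph{genuinely convex} function $M(x,\cdot)$ at a fixed $x$ — and not to an infimum of convex functions, which is where the argument of \cite{BDV,BV} breaks down — I obtain
\[
M\!\left(x,\frac{|u_\varepsilon(x)-u(x)|}{\delta}\right)\le\int_{B(0,1)}M\!\left(x,\frac{|\tau_{-\varepsilon y}u(x)-u(x)|}{\delta}\right)J(y)\,dy .
\]
Integrating over $\Omega$ and exchanging the order of integration by Tonelli's theorem (all integrands are nonnegative, and joint measurability of $(x,y)\mapsto M(x,|\tau_{-\varepsilon y}u(x)-u(x)|/\delta)$ follows from $M$ being measurable in $x$ and continuous in its second variable) yields
\[
\int_\Omega M\!\left(x,\frac{|u_\varepsilon-u|}{\delta}\right)dx\le\int_{B(0,1)}\left(\int_\Omega M\!\left(x,\frac{|\tau_{-\varepsilon y}u(x)-u(x)|}{\delta}\right)dx\right)J(y)\,dy .
\]

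To conclude I invoke Lemma \ref{lem6.1} with parameter $\delta$: it provides $\eta>0$ such that $|h|<\eta$ implies $\|\tau_hu-u\|_{L_M(\Omega)}<\delta$, hence $\int_\Omega M(x,|\tau_hu-u|/\delta)\,dx\le 1$ by (\ref{eq1.7.1}). Taking $\varepsilon<\min(\eta,\varepsilon_1)$ forces $|\varepsilon y|<\eta$ for all $y\in B(0,1)$, so the inner integral above is $\le 1$ for every such $y$, whence $\int_\Omega M(x,|u_\varepsilon-u|/\delta)\,dx\le\int_{B(0,1)}J(y)\,dy=1$ and therefore $\|u_\varepsilon-u\|_{L_M(\Omega)}\le\delta$. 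As $\delta>0$ was arbitrary, $\|u_\varepsilon-u\|_{L_M(\Omega)}\to0$ as $\varepsilon\to0^+$. The only delicate points are the bookkeeping that identifies the convolution with a zero-extended translate, the joint measurability needed for Tonelli, and the uniformity of $\eta$ over $y\in B(0,1)$; none of these is a genuine obstacle, the real content being just the interplay of convexity (Jensen) with the $M$-mean continuity established in Lemma \ref{lem6.1}.
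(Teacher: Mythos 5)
Your proof is correct, but it takes a genuinely different route from the paper's. The paper argues at the norm level by duality: for $v\in L_{\overline{M}}(\Omega)$ it applies Fubini and the H\"older inequality (\ref{eq1.1.4}) to $\int_\Omega|(u_\varepsilon-u)v|\,dx$, then converts back to the Luxemburg norm via the Orlicz--Luxemburg norm equivalence of Lemma \ref{lem1.1.6} (which is where (\ref{incBM}) enters a second time), obtaining $\|u_\varepsilon-u\|_{L_M(\Omega)}\le 2\int_{|y|\le1}\|\tau_{-\varepsilon y}u-u\|_{L_M(\Omega)}J(y)\,dy$ before invoking Lemma \ref{lem6.1}. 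You instead stay at the modular level: Jensen's inequality for the fixed-$x$ convex function $M(x,\cdot)$ against the probability measure $J(y)\,dy$, Tonelli, and then the unit-ball characterization (\ref{eq1.7.1}) combined with the $M$-mean continuity of Lemma \ref{lem6.1} (whose proof in fact directly gives $\int_\Omega M(x,|\tau_hu-u|/\delta)\,dx\le1$ for $|h|$ small, so you could skip the detour through (\ref{eq1.7.1})). Your version dispenses entirely with the complementary function, the Orlicz norm and Lemma \ref{lem1.1.6}, and even avoids the paper's factor $2$; the price is the joint-measurability bookkeeping for Tonelli, which is harmless (e.g.\ via the representation $M(x,t)=\int_0^t a(x,s)\,ds$, and note the integrand is supported in a fixed compact subset of $\Omega$), and your application of Jensen is legitimate since it is applied to the genuinely convex $M(x,\cdot)$ for fixed $x$, not to an infimum of convex functions as in the flawed argument of \cite{BDV,BV}. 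The paper's duality route, conversely, keeps all estimates at the level of norms and sidesteps any Jensen-type step. Both proofs rest on the same key ingredient, Lemma \ref{lem6.1}, and the smoothness part is identical.
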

\begin{proof}
	For $\varepsilon>0$, the function $u_{\varepsilon}$ defined in (\ref{eq11}) belongs to $\mathcal{C}^{\infty}_0(\Omega)$ whenever
	$\varepsilon<dist(supp\;u,\partial\Omega)$ (see for example  \cite[Theorem 2.29]{AA}).
	Let $\overline{M}$ stands for the complementary $\Phi$-function of $M$ and let $v\in L_{\overline{M}}(\Omega)$.
	By Fubini's theorem and H\"{o}lder's inequality (\ref{eq1.1.4}) we can write
	$$
	\begin{array}{lll}\displaystyle
	\int_\Omega\big|\big(u_\varepsilon(x)-u(x)\big)v(x)\big|dx &\leq\displaystyle\int_{\mathrm{R}^N}\Big(\int_\Omega|u(x-\varepsilon y)-u(x)||v(x)|dx\Big)J(y)dy\\
	&\leq\displaystyle2\|v\|_{L_{\overline{M}}(\Omega)}\int_{|y|\leq 1} \|\tau_{-\varepsilon y}u-u\|_{L_M(\Omega)}J(y)dy.
	\end{array}
	$$
	Hence, by the definition of the Orlicz norm and the inequality (\ref{eq1.3.3}) we obtain
	$$
	\|u_\varepsilon-u\|_{L_{M}(\Omega)}\leq2\int_{|y|\leq1}\|\tau_{-\varepsilon y}u-u\|_{L_M(\Omega)}J(y)dy.
	$$
	We can now use Lemma \ref{lem6.1} Given $\mu>0$, there exists $\eta>0$ such that for $\varepsilon<\eta$ we get
	$$
	\|\tau_{-\varepsilon y}u(x)- u(x)\|_{L_M(\Omega)} \leq \mu
	$$
	for every $y$ with $|y|\leq 1$. Then we conclude that
	$$
	\| u_\varepsilon- u\|_{L_{M}(\Omega)}\leq 2\mu \int_{|y|\leq 1}  J(y)dy=2\mu,
	$$
	which gives the result.
\end{proof}
\section{Auxiliary results}\label{sec3}
In the following lemma  we prove that under the condition (\ref{incBM}) bounded functions compactly supported in $\Omega$ are  dense in
$E_M(\Omega)$ with respect to the strong topology and in $L_M(\Omega)$ with respect to the modular topology.
\begin{lemma}\label{th1.1.8}
	Assume that $M\in\phi$ satisfies (\ref{incBM}). Then
	\begin{enumerate}
		\item
		$\mathcal{B}_c(\Omega)$ is dense in $E_M(\Omega)$ with respect to the strong topology in $L_M(\Omega)$.
		\item
		$\mathcal{B}_c(\Omega)$ is dense in $L_M(\Omega)$ with respect to the modular topology in $L_M(\Omega)$.
	\end{enumerate}
\end{lemma}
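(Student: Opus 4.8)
The plan is to approximate an arbitrary element of the relevant space by cutting it off simultaneously in amplitude and in support, and then to pass to the limit in the modular by Lebesgue's dominated convergence theorem. Fix once and for all an increasing exhaustion of $\Omega$ by compact sets, for instance
$$
K_n=\Big\{x\in\Omega:\ |x|\leq n\ \text{ and }\ \mathrm{dist}(x,\mathrm{R}^N\setminus\Omega)\geq\tfrac1n\Big\},
$$
so that $K_n\subset K_{n+1}$, each $K_n$ is a compact subset of $\Omega$, and $\bigcup_n K_n=\Omega$. Given a measurable $u:\Omega\to\mathrm{R}$, set $u_n=T_n(u)\,\mathbf{1}_{K_n}$, where $T_n(s)=s$ for $|s|\leq n$ and $T_n(s)=n\,\mathrm{sgn}(s)$ for $|s|>n$. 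Then $|u_n|\leq n$ and $\mathrm{supp}\,u_n\subset K_n$, so $u_n\in\mathcal{B}_c(\Omega)$; by (\ref{incBM}) we have $\mathcal{B}_c(\Omega)\subset E_M(\Omega)\subset L_M(\Omega)$, so every function involved lies in $L_M(\Omega)$. Moreover $|u_n-u|\leq|u|$ everywhere, and for each $x\in\Omega$ one has $u_n(x)=u(x)$ as soon as $x\in K_n$ and $n\geq|u(x)|$; since $\bigcup_n K_n=\Omega$ and $u$ is real-valued, this gives $u_n\to u$ pointwise on $\Omega$.

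For assertion (1), let $u\in E_M(\Omega)$ and fix an arbitrary $\lambda>0$. Then $M(\cdot,|u|/\lambda)\in L^1(\Omega)$, and because $M(x,\cdot)$ is nondecreasing we have the domination $M(x,|u_n(x)-u(x)|/\lambda)\leq M(x,|u(x)|/\lambda)$, while the left-hand side tends to $0$ for a.e.\ $x$. Dominated convergence yields $\int_\Omega M(x,|u_n(x)-u(x)|/\lambda)\,dx\to0$. Since $\lambda>0$ was arbitrary, for each $\lambda>0$ the integral is $\leq1$ for $n$ large, hence (by (\ref{eq1.7.1}) applied to $(u_n-u)/\lambda$) $\|u_n-u\|_{L_M(\Omega)}\leq\lambda$ eventually; therefore $\|u_n-u\|_{L_M(\Omega)}\to0$, which proves the strong density.

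For assertion (2), let $u\in L_M(\Omega)$; by definition there is at least one $\lambda>0$ with $M(\cdot,|u|/\lambda)\in L^1(\Omega)$. The very same domination and dominated convergence give $\int_\Omega M(x,|u_n(x)-u(x)|/\lambda)\,dx\to0$ for this particular $\lambda$, which is precisely the statement that $u_n\to u$ in the modular sense. This establishes the modular density and completes the proof.

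The argument is largely routine, and the only genuine points of care are the following. First, the exhaustion must be chosen so that each $K_n$ is compact \emph{and} contained in $\Omega$ (hence the distance condition), which is what guarantees $u_n\in\mathcal{B}_c(\Omega)$ rather than merely $u_n\in L^\infty$. Second, one must keep track of the difference between the two conclusions: for the strong topology one exploits that $u\in E_M(\Omega)$ to obtain the modular estimate for \emph{every} $\lambda>0$, which is exactly what upgrades modular control to norm control, whereas for the modular topology a single admissible $\lambda$ suffices and plain membership in $L_M(\Omega)$ is enough. The measurability of $x\mapsto M(x,|u_n(x)-u(x)|/\lambda)$ is the standard Carath\'eodory-type fact for $\phi$-functions already used tacitly in the earlier parts of the paper.
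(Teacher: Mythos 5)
Your proof is correct and follows essentially the same route as the paper: truncation in amplitude combined with restriction to the compact exhaustion $K_n$, followed by dominated convergence, distinguishing ``for every $\lambda$'' in $E_M(\Omega)$ from ``for some $\lambda$'' in $L_M(\Omega)$. The only (harmless) difference is that you use the sharper pointwise bound $|u_n-u|\leq|u|$, whereas the paper dominates $M(x,|u_n-u|/2\lambda)$ by $M(x,|u|/\lambda)$ via the extra factor $2$.
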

\begin{proof}
	\begin{enumerate}
		\item [1.] If $u\in E_M(\Omega)$, then  for all $\lambda>0$ one has $M(x,|u|/\lambda)\in L^{1}(\Omega)$. Denote by
	\end{enumerate}
	$T_{j},$ $j>0,$ the truncation function at levels $\pm j$ defined on $\mathrm{R}$ by $T_{j}(s)=\max\{-j,\min\{j,s\}\}.$ We define the sequence $\{u_j\}$ by
	\begin{equation}\label{eq1.1.9}
	u_j=T_{j}(u)\chi_{K_j},
	\end{equation}
	where $\chi_{K_j}$ stands for the characteristic function of the set
	$$
	K_j=\bigg\{x\in \Omega: |x|\leq j, dist(x,\Omega^c)\geq \frac{1}{j}\bigg\}.
	$$
	Hence, the function $u_j\in \mathcal{B}_c(\Omega)$ and converges almost everywhere to $u$ in $\Omega$. Thus $M(x,|u_j(x)-u(x)|/\lambda)\rightarrow 0$ a.e. in $\Omega$ and
	\begin{equation}\label{eq1.7.6}
	M(x,|u_j(x)-u(x)|/2\lambda)\leq M(x,|u(x)|/\lambda) \in L^1(\Omega).
	\end{equation}
	So that by the Lebesgue dominated convergence theorem, we obtain
	$$
	\int_\Omega M(x|u_j(x)-u(x)|/2\lambda)dx\leq 1 \mbox{ for }j \mbox{  large enough},
	$$
	which yields
	$\lim_{j\to+\infty}\|u_j-u\|_{L_M(\Omega)}\leq\lambda$.
	Being $\lambda>0$ arbitrary, we get $\lim_{j\rightarrow +\infty} \|u_j-u\|_{L_M(\Omega)}=0.$
	\begin{enumerate}
		\item [2.] Now if $u\in L_M(\Omega)$ then  for some $\lambda>0$ one has $M(x,|u|/\lambda)\in L^{1}(\Omega)$.
	\end{enumerate}
	Let $\{u_j\}$  the sequence defined in (\ref{eq1.1.9}). The inequality (\ref{eq1.7.6}) holds for some $\lambda>0$ and since $u_j\in \mathcal{B}_c(\Omega)$ and converges a.e. to $u$ in $\Omega$, we get $M(x,|u_j(x)-u(x)|/2\lambda)\rightarrow 0$ a.e. in $\Omega$. Thus, Lebesgue's dominated convergence theorem yields
	$$
	\int_\Omega M(x,|u_j(x)-u(x)|/2\lambda)\rightarrow 0, \mbox{ as } j\rightarrow \infty.
	$$
\end{proof}
\begin{remark}
	In view of Lemma \ref{delta2equivalence}, observe that if the $\phi$-function $M\in\Delta_2$ then bounded functions compactly supported in $\Omega$ are dense in $L_{M}(\Omega)$ for the norm topology.
\end{remark}
\par We denote by $\mathcal{S}$ the family of finite linear combinations of characteristic functions of measurable sets $B_i$ with finite Lebesgue measure, expressed as  follows
$$
\sum_{i=1}^{p}\alpha_i\chi_{B_i}(x), \; \mbox{ with }\alpha_1,\alpha_2,\cdots,\alpha_p\in \mathrm{R} \mbox{ and }|B_i|<+\infty.
$$
Let $\mathcal{S}_c$ stands for the set of the functions belonging to $\mathcal{S}$ with the additional property that
$\cup_{i=1}^{p} B_i\subset K$,
for some compact subset $K$ of $\Omega$. In the following lemma  we prove the density of $\mathcal{S}_c$ in $E_M(\Omega)$.
\begin{lemma}\label{th1.2.1}
	Assume that $M\in\phi$ satisfies (\ref{incBM}). Then  the set $\mathcal{S}_c$ is dense in $E_M(\Omega)$ with respect to the strong topology in $E_M(\Omega)$.
\end{lemma}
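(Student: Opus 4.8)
The plan is to reduce the statement to a uniform approximation of a single bounded, compactly supported function by compactly supported simple functions, and then to control the resulting modular by the convexity of $M$ together with the local integrability (\ref{incBM}). Since Lemma \ref{th1.1.8}(1) already gives that $\mathcal{B}_c(\Omega)$ is dense in $E_M(\Omega)$ for the norm topology, and since $\mathcal{S}_c\subset\mathcal{B}_c(\Omega)\subset E_M(\Omega)$ (the last inclusion by (\ref{incBM})), it suffices by the triangle inequality to prove the following: for every $u\in\mathcal{B}_c(\Omega)$ and every $\varepsilon>0$ there is $s\in\mathcal{S}_c$ with $\|u-s\|_{L_M(\Omega)}<\varepsilon$.

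Fix such a $u$, and let $c>0$ and a compact set $K\subset\Omega$ be such that $|u|\leq c$ a.e. in $\Omega$ and $u$ vanishes outside $K$; recall that $|K|<\infty$. First I would invoke the classical fact that a bounded measurable function is a uniform limit of simple functions: splitting $u=u^{+}-u^{-}$ and setting $s_{j}^{\pm}=2^{-j}\lfloor 2^{j}u^{\pm}\rfloor$, $s_{j}=s_{j}^{+}-s_{j}^{-}$, one obtains, since $u^{\pm}\leq c$, simple functions $s_{j}$ with $|s_{j}|\leq c$, $\{s_{j}\neq 0\}\subset\{u\neq 0\}\subset K$, and $|u(x)-s_{j}(x)|\leq 2^{-j}$ for every $x$. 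Each $s_{j}$ is therefore a finite linear combination of characteristic functions of measurable subsets of $K$, each of finite measure and with union contained in the compact set $K$; hence $s_{j}\in\mathcal{S}_c$.

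It then remains to check that $\|u-s_{j}\|_{L_M(\Omega)}\to 0$. Fix an arbitrary $\lambda>0$. Since $0\leq|u-s_{j}|\leq 2^{-j}$ and $u-s_{j}$ is supported in $K$, the monotonicity of $M(x,\cdot)$ and the elementary estimate $M(x,\theta t)\leq\theta M(x,t)$ for $\theta\in[0,1]$ (a consequence of the convexity of $M(x,\cdot)$ and $M(x,0)=0$) give
\[
\int_{\Omega}M\!\left(x,\frac{|u(x)-s_{j}(x)|}{\lambda}\right)dx\leq\int_{K}M\!\left(x,\frac{2^{-j}}{\lambda}\right)dx\leq 2^{-j}\int_{K}M\!\left(x,\frac{1}{\lambda}\right)dx .
\]
By (\ref{incBM}) the integral $\int_{K}M(x,1/\lambda)\,dx$ is finite, so the right-hand side tends to $0$ as $j\to\infty$; in particular it is $\leq 1$ for $j$ large, whence $\|u-s_{j}\|_{L_M(\Omega)}\leq\lambda$ for such $j$. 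As $\lambda>0$ was arbitrary, $\|u-s_{j}\|_{L_M(\Omega)}\to 0$, which completes the reduction and hence the proof.

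The only step requiring genuine care is the first one — arranging that the approximating simple functions converge uniformly to $u$ while remaining supported in a fixed compact subset of $\Omega$ with level sets of finite Lebesgue measure — and this is purely measure-theoretic. After that, the local integrability hypothesis (\ref{incBM}) finishes the argument with room to spare: thanks to the factor $2^{-j}$ produced by the convexity estimate, one does not even need a dominated convergence argument.
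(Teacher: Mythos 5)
Your proof is correct, and its overall skeleton matches the paper's: both reduce, via Lemma \ref{th1.1.8}(1), to approximating a fixed $u\in\mathcal{B}_c(\Omega)$ by elements of $\mathcal{S}_c$ in the $\|\cdot\|_{L_M(\Omega)}$ norm, and both conclude by showing $\int_\Omega M(x,|u-s_j|/\lambda)\,dx\leq 1$ for large $j$ and every fixed $\lambda>0$ (in fact you make the reduction through Lemma \ref{th1.1.8} explicit, whereas the paper's proof treats only $u\in\mathcal{B}_c(\Omega)$ and leaves that step implicit). The difference lies in how the modular is sent to zero: the paper takes an arbitrary pointwise approximating sequence of simple functions with $|u_n|\leq|u|$, dominates $M(x,|u_n-u|/\lambda)$ by $M(x,2|u|/\lambda)\leq M(x,2c/\lambda)\chi_K\in L^1$ (using (\ref{incBM})), and applies Lebesgue's dominated convergence theorem; you instead use the dyadic truncations $s_j$, which converge \emph{uniformly} with rate $2^{-j}$ and keep support in $K$, and then the convexity estimate $M(x,\theta t)\leq\theta M(x,t)$, $\theta\in[0,1]$, together with (\ref{incBM}) gives the quantitative bound $2^{-j}\int_K M(x,1/\lambda)\,dx\to 0$ with no convergence theorem needed. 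Your variant is slightly more elementary and yields an explicit rate, at the cost of having to construct the specific dyadic approximants and check they lie in $\mathcal{S}_c$ (which you do correctly: the nonzero level sets sit inside $K$ and have finite measure); the paper's version is softer and works for any dominated pointwise approximation. Both uses of (\ref{incBM}) are legitimate, so there is no gap.
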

\begin{proof}
	Let $u\in\mathcal{B}_c(\Omega)$. Since $u$ is a measurable function, by classical result \cite{AA} we know that there exists a sequence $\{u_n\}\subset\mathcal{S}$ converging pointwise to $u$ in $\Omega$ and satisfying $|u_n(x)|\leq |u(x)|$, for all $n\in\mathrm{N}$ and $x\in\Omega$. Since $u\in\mathcal{B}_c(\Omega)$, we can assume that $u_n\in\mathcal{S}_c$. Hence,
	$$
	M(x,|u_n(x)-u(x)|/\lambda) \leq M(x,2|u(x)|/\lambda) \in L^1(\Omega).
	$$
	As for all $\lambda>0$
	$$
	M(x,|u_n(x)-u(x)|/\lambda)\rightarrow 0\mbox{ a.e. in } \Omega,
	$$
	by the Lebesgue dominated convergence theorem we obtain
	$$
	\int_\Omega M(x,|u_n(x)-u(x)|/\lambda)dx\leq1, \mbox{ for } n \mbox{ large enough},
	$$
	which yields $\|u_n-u\|_{L_M(\Omega)}\leq\lambda$, for $n$  large enough. Being $\lambda>0$ arbitrary,  we get
	$$
	\|u_n-u\|_{L_M(\Omega)}\rightarrow0 \mbox{ as } n\rightarrow \infty.
	$$
\end{proof}

\begin{lemma}
	Assume that $M\in\Phi$ satisfies (\ref{incBM}). For every nonempty subset $E\subset K$ where $K$ is a compact subset of $\Omega$, there exist two constant numbers $c_1$, $c_2\geq 0$ such that
	\begin{equation}\label{eq1.6.2}
	\|\chi_{E}\|_{L_M(\Omega)}\leq \frac{1}{M^{-1}(c_1,\frac{c_2}{|E|})}.
	\end{equation}
\end{lemma}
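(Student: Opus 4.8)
The plan is to evaluate the Luxemburg norm $\lambda_E:=\|\chi_E\|_{L_M(\Omega)}$ directly and then to choose the point $c_1$ and the number $c_2$ so that $M^{-1}(c_1,c_2/|E|)$ falls just below $1/\lambda_E$. We may assume $|E|>0$, the case $|E|=0$ being trivial since then $\chi_E=0$ almost everywhere. Because $E\subset K$ with $K$ compact, assumption (\ref{incBM}) gives $\chi_E\in E_M(\Omega)\subset L_M(\Omega)$, so $\lambda_E<\infty$; moreover $\lambda_E>0$, because as $\lambda\to 0^+$ the functions $M(x,1/\lambda)$ increase to $+\infty$ on $E$, so $\int_E M(x,1/\lambda)\,dx\to+\infty$ by monotone convergence and such $\lambda$ cannot be admissible in the infimum defining $\lambda_E$.

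First I would check that $\int_E M(x,1/\lambda_E)\,dx\leq 1$. For every $\lambda>\lambda_E$ the definition of the norm gives $\int_E M(x,1/\lambda)\,dx\leq 1$, and letting $\lambda\downarrow\lambda_E$ the functions $M(x,1/\lambda)$ increase to $M(x,1/\lambda_E)$ (here we use that $M(x,\cdot)$ is nondecreasing), so the monotone convergence theorem yields $\int_E M(x,1/\lambda_E)\,dx\leq 1$. In particular $M(\cdot,1/\lambda_E)$ is finite almost everywhere on $E$, and, being positive there, the number $c_2:=\int_E M(x,1/\lambda_E)\,dx$ satisfies $0<c_2\leq 1$.

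Since the essential supremum of a nonnegative function is at least its mean value, the set of those $x\in E$ with $M(x,1/\lambda_E)\geq\frac1{|E|}\int_E M(y,1/\lambda_E)\,dy=\frac{c_2}{|E|}$ has positive measure; I choose $c_1$ in that set, so that $c_1\in E\subset K\subset\Omega$ and $M(c_1,1/\lambda_E)\geq c_2/|E|$. Applying the nondecreasing map $M^{-1}(c_1,\cdot)$ to this inequality and using the identity (\ref{inversefunction1}) we obtain
\[
\frac1{\lambda_E}=M^{-1}\!\big(c_1,M(c_1,1/\lambda_E)\big)\geq M^{-1}\!\Big(c_1,\tfrac{c_2}{|E|}\Big),
\]
whence $\|\chi_E\|_{L_M(\Omega)}=\lambda_E\leq \dfrac{1}{M^{-1}(c_1,c_2/|E|)}$, which is (\ref{eq1.6.2}).

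The proof is short and the two points deserving care are the limiting estimate $\int_E M(x,1/\lambda_E)\,dx\leq 1$ (which relies only on the monotonicity of $M(x,\cdot)$ and on the definition of the infimum) and the inversion identity $M^{-1}(c_1,M(c_1,s))=s$, which is precisely (\ref{inversefunction1}); the local integrability (\ref{incBM}) is used solely to ensure $\lambda_E<\infty$, so that the right-hand side of (\ref{eq1.6.2}) is finite. Note that the constants produced --- $c_1$ a point of $E$ and $c_2\in(0,1]$ --- depend on $E$, which is what the statement allows; in the $x$-independent Orlicz setting the mean value is attained identically, so $c_2=1$ and one recovers the classical identity $\|\chi_E\|_{L_M}=1/M^{-1}(1/|E|)$.
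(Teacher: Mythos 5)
Your proof is correct, but it takes a genuinely different route from the paper's. The paper fixes the candidate value $s=M^{-1}\big(k,\tfrac{1}{2|E|}\big)$ in advance and verifies the modular bound $\int_E M(x,s)\,dx\leq 1$ by combining the absolute continuity of the integral (coming from (\ref{incBM})) with Luzin's theorem applied to $x\mapsto M(x,s)$: it extracts a closed set $F_\eta\subset E$ with small complement, takes $k$ to be a maximizer of $M(\cdot,s)$ on $F_\eta$, and splits $\int_E=\int_{F_\eta}+\int_{E\setminus F_\eta}$, each piece contributing at most $\tfrac12$; this yields the fixed constant $c_2=\tfrac12$. You instead work directly with $\lambda_E=\|\chi_E\|_{L_M(\Omega)}$, prove the standard estimate $\int_E M(x,1/\lambda_E)\,dx\leq 1$, and then pick $c_1$ by the elementary observation that a nonnegative integrable function exceeds its mean on a set of positive measure, concluding via the inversion identity (\ref{inversefunction1}); your $c_2=\int_E M(x,1/\lambda_E)\,dx\in(0,1]$ depends on $E$, which the statement permits. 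Your argument avoids Luzin's theorem and the measure-splitting altogether and is arguably cleaner. One point deserves a word of justification: the monotone-convergence step $M(x,1/\lambda)\uparrow M(x,1/\lambda_E)$ as $\lambda\downarrow\lambda_E$ uses left-continuity of $M(x,\cdot)$, which nondecreasingness alone does not give when $M$ is allowed to take the value $+\infty$ (a convex function may jump to $+\infty$ at the edge of its effective domain); here (\ref{incBM}) saves you, since $\int_K M(x,c)\,dx<\infty$ for all rational $c>0$ forces $M(x,\cdot)$ to be finite on $[0,\infty)$, hence continuous by convexity, for a.e.\ $x\in K$, so the limit identity holds a.e.\ on $E$. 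For the same reason you should choose $c_1$ in the intersection of the ``above the mean'' set with the (full-measure) set where $M(c_1,1/\lambda_E)<\infty$, so that (\ref{inversefunction1}) applies at $c_1$; this intersection still has positive measure, so your selection goes through.
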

\begin{proof}
	Let $x_0\in\Omega$ be fixed. By the assumption (\ref{incBM}), the measurable function $x\mapsto M\Big(x,M^{-1}(x_0,\frac{1}{2|E|})\chi_{E}\Big)$ belongs to $L^1(\Omega)$. Hence, there is an $\eta>0$ such that for any measurable subset $\Omega^{\prime}$ of $\Omega$, one has
	$$
	|\Omega^{\prime}|<\eta\Rightarrow \int_{\Omega^{\prime}}M\Big(x,M^{-1}(x_0,\frac{1}{2|E|})\chi_{E}\Big)dx<\frac{1}{2}.
	$$
	As $M(\cdot,s)$ is measurable on $E$, Luzin's theorem implies that for $\eta>0$ there  exists a closed set $F_{\eta}\subset E$ such that the restriction
	of $M(\cdot,s)$ to $F_{\eta}$ is continuous and $|E\setminus F_{\eta}|<\eta$.
	Let $k$ be the point where the supremum of $M(\cdot,s)$ is reached in the set $F_{\eta}$.
	$$
	\begin{array}{lll}\displaystyle
	\int_{E} M\Big(x,M^{-1}(k,\frac{1}{2|E|})\Big)dx&=\displaystyle\int_{F_{\eta}}M\Big(x,M^{-1}(k,\frac{1}{2|E|})\Big)dx\\
	&+\displaystyle\int_{E\setminus F_{\eta}}M\Big(x,M^{-1}(k,\frac{1}{2|E|})\Big)dx.
	\end{array}
	$$
	For the first term in the right-hand side of the last equality, we use   (\ref{inversefunction1}) obtaining
	$$
	\int_{F_{\eta}} M\Big(x,M^{-1}(k,\frac{1}{2|E|})\Big)dx\leq\int_{F_{\eta}} M\Big(k,M^{-1}(k,\frac{1}{2|E|})\Big)dx \leq \frac{1}{2},
	$$
	while for the second one, since $|E\setminus F_{\eta}|<\eta$ we have
	$$
	\int_{E\setminus F_{\eta}}M\Big(x,M^{-1}(k,\frac{1}{2|E|})\Big)dx\leq \frac{1}{2}.
	$$
	Thus, we get
	$$
	\int_{\Omega} M\Big(x,M^{-1}(k,\frac{1}{2|E|})\chi_{E}\Big)dx
	\leq 1.
	$$
\end{proof}
\begin{remark}
	Formula (\ref{eq1.6.2}) remains valid either for every nonempty bounded subset $E$ of $\Omega$ or for every nonempty subset $E$ of $\Omega$ if $\Omega$ is a bounded open.
\end{remark}
\begin{lemma}\label{lem6.2}
	Let $M$ and $\overline{M}$ be two complementary $\Phi$-functions. Let $v\in L_{\overline{M}}(\Omega)$ be a fixed function. Define
	\begin{equation}\label{eq1.5.4}
	L_v(u)=\int_\Omega u(x)v(x)dx,\; \forall u\in L_M(\Omega).
	\end{equation}
	Then $L_v$ defines a linear continuous functional on $L_M(\Omega)$. Furthermore,
	$$
	\|v\|_{L_{\overline{M}}(\Omega)}\leq\|L_v\|\leq 2\|v\|_{L_{\overline{M}}(\Omega)},
	$$
	where $\|L_v\|=\sup\{|L_v(u)|, \|u\|_{L_M(\Omega)}\leq 1\}$.
\end{lemma}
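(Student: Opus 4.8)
The plan is to treat the two inequalities separately; the right-hand one is elementary and the left-hand one is the crux.

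Linearity of $L_v$ is clear, and H\"older's inequality (\ref{eq1.1.4}) shows $uv\in L^1(\Omega)$ for every $u\in L_M(\Omega)$, so $L_v$ is well defined on $L_M(\Omega)$. For the estimate $\|L_v\|\le 2\|v\|_{L_{\overline M}(\Omega)}$ I would argue straight from Young's inequality (\ref{younginequality}): for nonzero $u\in L_M(\Omega)$ and $v\in L_{\overline M}(\Omega)$, apply (\ref{younginequality}) pointwise to $|u(x)|/\|u\|_{L_M(\Omega)}$ and $|v(x)|/\|v\|_{L_{\overline M}(\Omega)}$ and integrate; by (\ref{eq1.7.1}) both modulars on the right are $\le 1$, so $\int_\Omega|uv|\,dx\le 2\|u\|_{L_M(\Omega)}\|v\|_{L_{\overline M}(\Omega)}$, whence $|L_v(u)|\le 2\|u\|_{L_M(\Omega)}\|v\|_{L_{\overline M}(\Omega)}$ and $L_v$ is continuous with $\|L_v\|\le 2\|v\|_{L_{\overline M}(\Omega)}$. (Alternatively one may combine H\"older's inequality (\ref{eq1.1.4}) with the right-hand inequality of Lemma \ref{lem1.1.6}, whose proof needs only Young's inequality.)

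For the lower bound $\|v\|_{L_{\overline M}(\Omega)}\le\|L_v\|$, I would first note that replacing $u$ by $|u|\,\mathrm{sign}(v)$ does not change $\|u\|_{L_M(\Omega)}$, so $\|L_v\|=\sup\{\int_\Omega|uv|\,dx:\ \|u\|_{L_M(\Omega)}\le 1\}$; that is, $\|L_v\|$ is exactly the Orlicz norm of $v$ for the pair $(\overline M,M)$, and the task reduces to showing that this Orlicz norm dominates the Luxemburg norm $\|v\|_{L_{\overline M}(\Omega)}$. Equivalently, for every $\lambda$ with $0<\lambda<\|v\|_{L_{\overline M}(\Omega)}$ — so that $\int_\Omega\overline M(x,|v|/\lambda)\,dx>1$ by the definition of the Luxemburg norm — I must produce $u\in L_M(\Omega)$ with $\|u\|_{L_M(\Omega)}\le 1$ and $\int_\Omega|uv|\,dx>\lambda$. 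Using the representations $M(x,t)=\int_0^t a(x,s)\,ds$, $\overline M(x,t)=\int_0^t a^*(x,s)\,ds$ and the equality case of Young's inequality, the natural candidate is $u(x)=\mathrm{sign}(v(x))\,a^*\!\left(x,|v(x)|/\lambda\right)$, which satisfies $\frac{1}{\lambda}\,u(x)v(x)=M(x,|u(x)|)+\overline M(x,|v(x)|/\lambda)$ a.e.; integrating gives $\int_\Omega u v\,dx=\lambda\big(\int_\Omega M(x,|u|)\,dx+\int_\Omega\overline M(x,|v|/\lambda)\,dx\big)$, and after replacing $u$ by $u/\max\{1,\int_\Omega M(x,|u|)\,dx\}$ (which keeps $\|u\|_{L_M(\Omega)}\le 1$, by convexity of $M$) a one-line computation shows $\int_\Omega|uv|\,dx$ still exceeds $\lambda$. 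Letting $\lambda\uparrow\|v\|_{L_{\overline M}(\Omega)}$ then finishes it.

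The main obstacle is that a priori the candidate $u=\mathrm{sign}(v)\,a^*(\cdot,|v|/\lambda)$ need not belong to $L_M(\Omega)$ at all, since $\int_\Omega M(x,|u|/\mu)\,dx$ may be infinite for every $\mu>0$ (no global integrability hypothesis such as (\ref{incBM}) is imposed in the statement). I would get around this by an exhaustion: choose $E_k\uparrow\Omega$ with $|E_k|<\infty$, replace $v$ by the truncation $v_k=T_k(|v|)\,\mathrm{sign}(v)\,\chi_{E_k}$, invoke the monotone convergence theorem to keep $\int_\Omega\overline M(x,|v_k|/\lambda)\,dx>1$ for $k$ large, run the construction above for the bounded, compactly supported datum $v_k$ (for which the test function is dominated by $a^*(\cdot,k/\lambda)$, so that its $M$-modular over $E_k$ is finite as soon as $a^*(\cdot,k/\lambda)\in L^1(E_k)$), and pass to the limit to get $\|L_v\|\ge\lambda$ for every $\lambda<\|v\|_{L_{\overline M}(\Omega)}$. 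In every situation where this lemma is applied $\overline M$ in fact satisfies (\ref{incBM}), in which case the required integrability is automatic and, indeed, the inequality $\|v\|_{L_{\overline M}(\Omega)}\le\|v\|_{\overline M}$ follows at once from Lemma \ref{lem1.1.6}. The case $v=0$ and the verification that the truncated modular stays above $1$ are routine.
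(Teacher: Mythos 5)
Your treatment of the right-hand inequality (Young's inequality at the scales $|u|/\|u\|_{L_M(\Omega)}$, $|v|/\|v\|_{L_{\overline M}(\Omega)}$ together with (\ref{eq1.7.1})) is correct and complete, and your strategy for the left-hand inequality -- reduce to $\|L_v\|=\sup\{\int_\Omega|uv|\,dx:\|u\|_{L_M(\Omega)}\le1\}$ via the sign trick, test with $u=\mathrm{sign}(v)\,a^*(\cdot,|v|/\lambda)$ using the equality case of Young's inequality, normalize by $\max\{1,\int_\Omega M(x,|u|)\,dx\}$, and truncate $v$ plus monotone convergence -- is the standard route (essentially the argument of the reference behind Lemma \ref{lem1.1.6}); the paper itself writes no proof and only points to the Orlicz-space case in Adams--Fournier, where the spatial dependence that causes your difficulty is absent.

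The gap is in how you dispose of that difficulty. Your exhaustion closes only ``as soon as $a^*(\cdot,k/\lambda)\in L^1(E_k)$'', and nothing in the hypotheses of the lemma gives this: no local integrability is assumed for $\overline M$ (nor for $M$), and $a^*(\cdot,c)$ can indeed fail to be locally integrable. The fallback ``in every situation where this lemma is applied $\overline M$ satisfies (\ref{incBM})'' both proves a weaker statement than the one asserted and is not accurate for this paper: Theorem \ref{thE1} assumes (\ref{incBM}) for $M$ only, and it is exactly there that the two-sided estimate on $L_v$ is needed for the isomorphism with $L_{\overline M}(\Omega)$. The missing step can, however, be supplied with the tools you already have: since $M$ and $\overline M$ are both $\Phi$-functions, they (and hence $a^*$) are finite at every level for a.e.\ $x$, so after passing to $v_k=T_k(|v|)\,\mathrm{sign}(v)\chi_{E_k}$ you may further intersect $E_k$ with the increasing sets $G_m=\{x\in E_k: M(x,a^*(x,|v_k(x)|/\lambda))\le m\}$, which exhaust $E_k$ up to a null set as $m\to\infty$; one more application of monotone convergence keeps $\int_{E_k\cap G_m}\overline M(x,|v_k|/\lambda)\,dx>1$ for $m$ large, while now $\alpha=\int_\Omega M(x,|u|)\,dx\le m\,|E_k|<\infty$ for the test function $u=\mathrm{sign}(v)\,a^*(\cdot,|v_k|/\lambda)\chi_{G_m}$. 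With $\alpha$ finite your case distinction $\alpha\le1$ versus $\alpha>1$ goes through verbatim (finiteness of the $\overline M$-modular is not needed: if it is infinite the desired strict inequality is immediate), and the lemma is obtained under its stated hypotheses, with no appeal to (\ref{incBM}).
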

\begin{proof}
	We omit the proof, since it's similar to the one given in \cite[Lemma 8.17]{AA} in the framework of Orlicz spaces.
	\par The above result holds also when $L_v$ is restricted to $E_{M}(\Omega)$. In general, continuous linear functionals defined on $L_M(\Omega)$ can be
	expressed in a different form to that defined in (\ref{eq1.5.4}) (see \cite[Theorem 3.13.5]{KJF}). Hence, we can not have the Riesz representation
	theorem  as in classical Lebesgue spaces. In the following lemma we give an "almost complete" analogue of Riesz representation theorem. A similar result
	in the Orlicz framework can be found in \cite[Theorem 3.13.6]{KJF}.
\end{proof}
\begin{lemma}\label{lem1.1.9}
	Assume that $M\in \Phi$ satisfies (\ref{incBM}) and let $L\in [E_M(\Omega)]^{\prime}$. Then there exists a unique function $v\in L_{\overline{M}}(\Omega)$ such that
	\begin{equation}\label{eq1.6.4}
	L(u)=\int_\Omega u(x)v(x)dx,~~ \forall u\in E_M(\Omega).
	\end{equation}
\end{lemma}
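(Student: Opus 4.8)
The plan is to construct $v$ first as a locally integrable function by applying the Radon--Nikodym theorem along a compact exhaustion of $\Omega$, then to upgrade it to an element of $L_{\overline M}(\Omega)$ by a converse Hölder estimate, and finally to identify $L$ with the functional $L_v$ of Lemma \ref{lem6.2} using the density of $\mathcal S_c$; we may assume $\|L\|>0$, the case $L\equiv0$ being trivial. Let $K_j=\{x\in\Omega:\ |x|\le j,\ \mathrm{dist}(x,\Omega^c)\ge 1/j\}$, so that $K_j$ is compact, $K_j\subset K_{j+1}$ and $\bigcup_j K_j=\Omega$. For measurable $E\subset K_j$ one has $\chi_E\in\mathcal B_c(\Omega)\subset E_M(\Omega)$ by (\ref{incBM}), so $\lambda(E):=L(\chi_E)$ is well defined. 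The decisive observation is that for $E\subset K_j$ we have $\|\chi_E\|_{L_M(\Omega)}\le\delta$ as soon as $\int_E M(x,1/\delta)\,dx\le1$, which by (\ref{incBM}) --- giving $M(\cdot,1/\delta)\in L^1(K_j)$ --- holds whenever $|E|$ is small enough. Hence $E\mapsto\lambda(E)$ is a finite, real-valued, countably additive set function on the measurable subsets of $K_j$, absolutely continuous with respect to Lebesgue measure, and the Radon--Nikodym theorem produces $v_j\in L^1(K_j)$ with $L(\chi_E)=\int_E v_j\,dx$ for all measurable $E\subset K_j$. Uniqueness in Radon--Nikodym forces $v_{j+1}=v_j$ a.e.\ on $K_j$, so the $v_j$ patch together into a measurable $v$ on $\Omega$ satisfying $L(\chi_E)=\int_\Omega\chi_E v\,dx$ whenever $\overline E$ is a compact subset of $\Omega$; by linearity, $L(s)=\int_\Omega s v\,dx$ for every $s\in\mathcal S_c$.

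The step $v\in L_{\overline M}(\Omega)$ is the heart of the matter. Fix $j$ and set $w_j=v\chi_{K_j}\in L^1(K_j)$. For any $s\in\mathcal S_c$ supported in $K_j$, the function $u=s\,\mathrm{sgn}(v)$ belongs to $\mathcal B_c(\Omega)$ and
\[
\int_\Omega|s\,w_j|\,dx=L(u)\le\|L\|\,\|u\|_{L_M(\Omega)}=\|L\|\,\|s\|_{L_M(\Omega)}.
\]
By a standard converse-Hölder argument --- testing this inequality against nonnegative simple functions supported in $K_j$ that increase to a suitable normalization of $a^{\ast}\!\big(\cdot,|w_j|/\|L\|\big)$, using the equality case of Young's inequality (\ref{younginequality}) and monotone convergence --- one deduces
\[
\int_{K_j}\overline M\!\big(x,|v(x)|/\|L\|\big)\,dx\le1,
\]
hence $\|v\chi_{K_j}\|_{L_{\overline M}(\Omega)}\le\|L\|$ for every $j$. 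Since $\overline M(x,|v\chi_{K_j}(x)|/\|L\|)\uparrow\overline M(x,|v(x)|/\|L\|)$, the monotone convergence theorem gives $\int_\Omega\overline M(x,|v(x)|/\|L\|)\,dx\le1$, so $v\in L_{\overline M}(\Omega)$.

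It remains to identify $L$ and to prove uniqueness. By Lemma \ref{lem6.2}, $L_v(u)=\int_\Omega uv\,dx$ is a continuous linear functional on $E_M(\Omega)$, and it agrees with $L$ on $\mathcal S_c$ by the first step. Since $\mathcal S_c$ is dense in $E_M(\Omega)$ for the norm topology (Lemma \ref{th1.2.1}), $L=L_v$ on all of $E_M(\Omega)$, which is (\ref{eq1.6.4}). If $v_1,v_2\in L_{\overline M}(\Omega)$ both represent $L$, testing with $\chi_E$ for $E\subset K_j$ gives $\int_E(v_1-v_2)\,dx=0$, so $v_1=v_2$ a.e.\ on each $K_j$ and therefore a.e.\ on $\Omega$.

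The main obstacle is the middle step: $v$ is produced only as an $L^1_{\mathrm{loc}}$ function, and one must squeeze out of the single continuity estimate $|L(\chi_E)|\le\|L\|\,\|\chi_E\|_{L_M(\Omega)}$ a uniform bound on the modulars $\int_{K_j}\overline M(x,|v|/\|L\|)\,dx$. This is precisely the converse of Hölder's inequality, and the spatial dependence of the conjugate density $a^{\ast}(x,\cdot)$ has to be managed with care --- truncation both in $x$ and in the values of $v$, together with the correct normalization --- before the limit $j\to\infty$ can be taken; everything else is routine once Lemmas \ref{lem6.2} and \ref{th1.2.1} are in hand.
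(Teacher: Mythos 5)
Your proposal is correct and follows essentially the same route as the paper's proof: apply the Radon--Nikodym theorem to $E\mapsto L(\chi_E)$ on sets compactly contained in $\Omega$ (absolute continuity coming from the smallness of $\|\chi_E\|_{L_M(\Omega)}$, guaranteed by (\ref{incBM})), establish a duality bound to conclude $v\in L_{\overline M}(\Omega)$, and then identify $L$ with $L_v$ using Lemma \ref{lem6.2} and the density of $\mathcal{S}_c$ from Lemma \ref{th1.2.1}. Your refinements --- constructing $v$ by exhaustion and patching (so that it is a priori only $L^1_{\mathrm{loc}}$), making the converse-H\"older/Young-equality step explicit where the paper simply asserts $v\in L_{\overline M}(\Omega)$ after its Fatou estimate, and proving uniqueness --- are more careful versions of the same argument rather than a different approach.
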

\begin{proof}
	We begin first by assuming that $u$ belongs to $\mathcal{S}_c$ i.e. $u$ is of the form  $\sum_{i=1}^{p}\alpha_i\chi_{B_i}(x)$ where $B_i$
	are measurable sets of finite Lebesgue measure such that $\cup_{i=1}^{p} B_i\subset K$ for some compact subset $K\subset\Omega$ and $\alpha_i\in\mathrm{R}$, for $i=1,2,\cdots,p$. Let $\mu$ be the complex measure defined for a measurable set of finite Lebesgue measure $A\subset K\subset\Omega$ for some compact $K$ as follows
	$$
	\mu(A)=L(\chi_{A}).
	$$
	By (\ref{eq1.6.2}) there exist two constants $c_1,$ $c_2\geq 0$ such that
	$$
	|\mu(A)|\leq  \|L\|\|\chi_{A}\|_{L_M(\Omega)}\leq  \frac{\|L\|}{M^{-1}(c_1,c_2/|A|)}\rightarrow 0\mbox{ as } |A|\rightarrow 0.
	$$
	Thus, the measure $\mu$ is absolutely continuous with respect
	to the Lebesgue measure and it follows by Radon-Nikodym's theorem that there exists a nonnegative measurable function $v\in L^1(\Omega)$, unique up to sets of Lebesgue measure zero, such that
	$$
	\mu(A)=\int_{A}v(x)dx.
	$$
	Hence, we can write
	\begin{equation}\label{eq1.6.3}
	\begin{array}{lll}
	L(u)&=\displaystyle\sum_{i=1}^p\alpha_iL(\chi_{B_i})=\sum_{i=1}^p\alpha_i\mu(B_i)
	=\sum_{i=1}^p\alpha_i\int_{B_i}v(x)dx\\
	&=\displaystyle\sum_{i=1}^p\alpha_i\int_\Omega v(x)\chi_{B_i}dx=\int_\Omega u(x)v(x)dx.
	\end{array}
	\end{equation}
	Now for arbitrary $u\in E_M(\Omega)$, by Lemma \ref{th1.2.1} we can found a sequence of functions $u_j\in \mathcal{S}_c$ such that $u_j\rightarrow u$ a.e. in $\Omega$ and strongly in $E_M(\Omega)$. Therefore, by Fatou's lemma we obtain
	$$
	\begin{array}{lll}\displaystyle
	\big|\int_\Omega u(x)v(x)dx\big|&\leq\displaystyle\liminf_{j\to+\infty}\int_\Omega |u_j(x)v(x)|dx=\liminf_{j\to+\infty}L(|u_j|\mbox{sgn}v)\\
	&\leq\displaystyle\|L\|\liminf_{j\to+\infty}\|u_j\|_{L_M(\Omega)}\leq\|L\|\|u\|_{L_M(\Omega)}.
	\end{array}
	$$
	This implies that $v\in L_{\overline{M}}(\Omega)$. Let $L_v(u)=\displaystyle\int_\Omega u(x)v(x)dx$, the linear functional defined by (\ref{eq1.5.4}). By (\ref{eq1.6.4}), $L_v$ and $L$ coincide on the set $\mathcal{S}_c$ and by Lemma \ref{th1.2.1}, they coincide everywhere in $E_M(\Omega)$.
\end{proof}
\section{Proof of the main results}\label{sec4}
\subsubsection*{Proof of Theorem \ref{th1.1.1}}
\begin{enumerate}
	\item Combining Lemma \ref{lem3.4} and Lemma \ref{th1.1.8}, we obtain the density of $\mathcal{C}^{\infty}_{0}(\Omega)$ in $E_M(\Omega)$ with respect to the strong topology.
	\item Let $u\in L_M(\Omega)$. According to Lemma \ref{th1.1.8}, there exist $w\in \mathcal{B}_c(\Omega)$ and $\lambda>0$ such that for all $\eta\geq0$
	$$
	\int_{\Omega}M(x,|u(x)-w(x)|/\lambda)dx\leq\eta.
	$$
	Then by Lemma \ref{lem3.4}, there exists a function $v\in \mathcal{C}^\infty_0(\Omega)$ that converges strongly to $w$ in $L_M(\Omega)$. But we know that the norm topology is strong than the
	modular one, more precisely we have
	$$
	\int_{\Omega}M(x,|w(x)-v(x)|)dx\leq\eta.
	$$
	Let us make the choice $\lambda_{1}=\max\{1,\lambda\}$ and use the convexity of the $\Phi$-function $M$, we can write
	$$
	\begin{array}{lll}\displaystyle
	\int_\Omega M(x,|u(x)-v(x)|/2\lambda_1)dx&\leq\displaystyle\frac{1}{2}\int_\Omega M(x,|u(x)-w(x)|/\lambda)dx\\
	&+\displaystyle\frac{1}{2}\int_\Omega M(x,|w(x)-v(x)|)dx.
	\end{array}
	$$
	This, yields the result.
\end{enumerate}
\begin{remark}\label{rem1.1.3}
	Let $M\in\Phi$. If $M\in\Delta_2$, then $\mathcal{C}^\infty_0(\Omega)$ is dense in $L_M(\Omega)$ with respect to the norm $\|\cdot\|_{L_M(\Omega)}$.
\end{remark}
\subsubsection*{Proof of theorem \ref{th1.1.2}}
Let $u\in E_M(\Omega)$. By virtue of Theorem \ref{th1.1.1}, we can assume that $u\in \mathcal{C}_c(\Omega)$ (the set of continuous functions with compact support in $\Omega$). Hence, it's sufficient to show that there exists a countable set dense in $\mathcal{C}_c(\Omega)$ with respect to the strong topology in $E_M(\Omega)$.
Let $\Omega_n$ be the sequence of compact subsets of $\mathrm{R}^N$ defined by
$$
\Omega_n=\{x\in \Omega: |x|\leq n \mbox{ and } dist(x,\partial \Omega)\geq \frac{1}{n}\}.
$$
Recall that
$\Omega=\cup_{i=1}^{+\infty}\Omega_n$.
Let $\mathcal{P}$ be the set of all polynomials on $\mathrm{R}^N$ with rational coefficients and
$$
\mathcal{P}_n=\{v\chi_{\Omega_n}: v\in \mathcal{P}\},
$$
where $\chi_{\Omega_n}$ is the characteristic function of $\Omega_n$.  If $dist(supp\; u,\partial \Omega)>\frac{1}{n}$ then $u$ belongs to $\mathcal{C}(\Omega_n)$ and by using the density of  $\mathcal{P}_n$ in $\mathcal{C}(\Omega_n)$ \cite[corollary 1.32]{AA},  there exists a sequence $u_j\in\mathcal{ P}_n$ that converges uniformly to $u$, that is to say
$$
\forall \varepsilon>0, \exists j_0\in\mathrm{N}, \forall j\geq j_0, \sup_{x\in \Omega_n}|u_j(x)-u(x)|\leq \frac{\varepsilon}{\displaystyle\int_{\Omega_n} M(x,1)dx+1}.
$$
So that for every $\varepsilon>0$, there exists $j_0\in\mathrm{N}$ such that for any $j\geq j_0$, one has
$$
\begin{array}{lll}\displaystyle
\int_{\Omega_n} M\Big(x,\frac{|u(x)-u_j(x)|}{\varepsilon}\Big)dx\leq 1.
\end{array}
$$
Therefore, $\mathcal{P}_n$ is dense in $\mathcal{C}(\Omega_n)$ for the strong topology in $L_M(\Omega)$. Consequently, the countable set
$\cup_{n=1}^\infty \mathcal{P}_n$ is dense in $E_M(\Omega)$ and $E_M(\Omega)$ is separable.
\begin{remark}\label{rem1.1.1}
	In view of Lemma \ref{delta2equivalence}, if the $\Phi$-function $M\in\Delta_2$ and satisfies (\ref{incBM}) then $L_M(\Omega)$ is a separable space.
\end{remark}
\subsubsection*{Proof of theorem \ref{thE1}}
It is immediate that from Lemma \ref{lem1.1.9}, we get the isomorphism
$$
L_{\overline{M}}(\Omega)\simeq [E_M(\Omega)]^{\prime}.
$$
\subsection*{Proof of theorem \ref{th1.2.2} }
Since $M$ and $\overline{M}$ satisfy both the $\Delta_2$-condition, by
Lemma \ref{delta2equivalence} and Theorem \ref{thE1} we get
$$
L_{\overline{M}}(\Omega)\simeq[L_M(\Omega)]^\prime \mbox{ and }L_{M}(\Omega)\simeq[L_{\overline{M}}(\Omega)]^\prime
$$
and then the conclusion follows.




\end{document}